\theoremstyle{plain}
\newtheorem{theorem}{Theorem}[section]
\newtheorem{proposition}[theorem]{Proposition}
\newtheorem{lemma}[theorem]{Lemma}
\newtheorem{corollary}[theorem]{Corollary}
\theoremstyle{definition}
\newtheorem{example}[theorem]{Example}
\theoremstyle{remark}
\newtheorem{remark}[theorem]{Remark}
\newcommand{\C}{\mathbb C}
\newcommand{\K}{\mathbb K}
\newcommand{\N}{\mathbb N}
\newcommand{\Q}{\mathbb Q}
\newcommand{\R}{\mathbb R}
\newcommand{\Z}{\mathbb Z}
\DeclareMathOperator{\den}{den}
\DeclareMathOperator{\lcm}{lcm}
\DeclareMathOperator{\ord}{ord}
\title{Euler's factorial series at algebraic integer points}
\author{Louna Sepp\"al\"a}
\address{Louna Sepp\"al\"a, Matematiikka, PL 8000, 90014 Oulun yliopisto, Finland}
\email{louna.seppala@oulu.fi}
\date{\today}
\subjclass[2010]{11J61, 41A21}
\keywords{Diophantine approximation, divergent series, number fields, Pad\'e approximation, $p$-adic, valuations}
\begin{document}

\begin{abstract}
We study a linear form in the values of Euler's series $F(t)=\sum_{n=0}^\infty n!t^n$ at algebraic integer points $\alpha_1, \ldots, \alpha_m \in \Z_\K$ belonging to a number field $\mathbb{K}$. Let $v|p$ be a non-Archimedean valuation of $\mathbb{K}$.
Two types of non-vanishing results for the linear form $\Lambda_v = \lambda_0 + \lambda_1 F_v(\alpha_1) + \ldots + \lambda_m F_v(\alpha_m)$, $\lambda_i \in \Z_{\K}$, are derived, the second of them containing a lower bound for the $v$-adic absolute value of $\Lambda_v$. The first non-vanishing result is also extended to the case of primes in residue classes.
On the way to the main results, we present explicit Pad\'e approximations to the generalised factorial series $\sum_{n=0}^\infty \left( \prod_{k=0}^{n-1} P(k) \right) t^n$, where $P(x)$ is a polynomial of degree one.
\end{abstract}

\maketitle

\section{Introduction}

Euler's factorial series
\begin{equation}\label{hyperseries}
F(t):={}_2F_0(1,1\mid t)=\sum_{n=0}^\infty n!t^n
\end{equation}
converges $p$-adically for all primes $p$ when $|t|_p \le 1$.
In the $v$-adic metric (where $v$ extends $p$ for some prime $p$) of a number field $\K$, the series $F(t)$ converges to a point in the $v$-adic closure of $\K$ (denoted by $\K_v$) when $t \in \mathbb{K}$ is such that $|t|_v < p^{\frac{1}{p-1}}$.
Thus we write $\sum_{n=0}^\infty n!t^n =: F_v(t)$ when treating the series as a function in the $v$-adic domain $\mathbb{K}$.

Euler's series \eqref{hyperseries} is a member of the class of $F$-series (series of the form $\sum_{n=0}^\infty a_n n! z^n$, with certain conditions on the coefficients $a_n$) introduced by V. G. Chirski\u\i \, in \cite{Chirskii1989}, \cite{Chirskii1990}.
In those papers he answered the problem of the existence of global relations\footnote{Let $P \in \K[x_1,\ldots,x_m]$ be a polynomial of $m$ variables and suppose $F_1(t),\ldots,F_m(t) \in \K[[t]]$ are power series. Take a $\xi \in \K$. A relation
$$
P(F_1(\xi),\ldots,F_m(\xi)) =0
$$
is called \emph{global} if it holds in all the fields $\K_v$ where all the series $F_1(\xi),\ldots,F_m(\xi)$ converge.} between members of the class of $F$-functions.
As he points out in \cite{Chirskii1992}, the results can be refined in terms of estimating the prime $p$ for which there exists a valuation $v|p$ breaking the global relation.
These estimates were made entirely effective by Bertrand, Chirski\u\i, and Yebbou in \cite{Bertrand}. In \cite[Theorem 1.1]{Bertrand} they describe an infinite collection of intervals each containing a prime number $p$ such that for some valuation $v|p$ it holds
\begin{equation}\label{BCYlinform}
h_1 f_1(\xi) + \ldots + h_m f_m(\xi) \neq 0,
\end{equation}
where $h_i \in \Z_\K$ and $f_1(t) \equiv 1, f_2(t), \ldots, f_m (t)$ are $F$-series that are linearly independent over $\K(z)$ and constitute a solution to a differential system $D$, and $\xi \in \K \setminus \{0\}$ is an ordinary point of the system $D$. What is more, the non-vanishing in \eqref{BCYlinform} is replaced by a lower bound for $\left| h_1 f_1(\xi) + \ldots + h_m f_m(\xi) \right|_v$.

In their recent paper \cite{MatZud}, T. Matala-aho and W. Zudilin studied the irrationality of $F_p(\xi)$ at a point $\xi \in \Z \setminus \{0\}$ (i.e. global relations of the numbers $1$ and $F_p(\xi)$). In Theorem \ref{mainresult} of this paper we generalise their idea to a linear form
$$
\Lambda_v := \lambda_0 + \lambda_1 F_v(\alpha_1) + \ldots + \lambda_m F_v(\alpha_m), \quad \lambda_i \in \Z_\K,
$$
in values of Euler's series at $m$ given pairwise distinct algebraic integer points $\alpha_1, \ldots, \alpha_m \in \Z_\mathbb{K} \setminus \{0\}$. 
Theorem \ref{mainresult} states that in any collection $V$ of non-Archimedean valuations of $\K$ satisfying a certain condition, there exists a valuation $v' \in V$ such that $\Lambda_{v'} \neq 0$.
The result can also be extended to the case of primes in arithmetic progressions, generalising the recent result of \cite{ATL2018}. This is done in Theorem \ref{ResidueThm} of Section \ref{sec:corollaries}.

In the second main result, Theorem \ref{mainresult2}, we characterise an interval $I(m,H)$ (where $H$ is an upper bound for the height of the coefficients $\lambda_i$) from which one can find a prime $p$ such that there exists a valuation $v'|p$ for which
$$
\| \Lambda_{v'} \|_{v'} > H^{-(m+1) - 114 m^2 \cdot  \frac{\log \log \log H}{\log \log H}}.
$$
Our method is based on explicit Pad\'e approximations, whereas Bertrand, Chirski\u\i, and Yebbou \cite{Bertrand} use Siegel's lemma. In addition, the functional dependence on $H$ in our lower bound is improved compared to \cite{Bertrand}.

The proofs of both our main results rely on Pad\'e approximations which are used to construct small approximation forms for the values $F_v(\alpha_j)$, $j=1,\ldots,m$. Therefore, before moving to the proofs of the theorems, we shall present explicit Pad\'e approximations (with the orders of the remainders as free parameters) to the generalised factorial series
\begin{equation}\label{genfacser}
G(t) = \sum_{n=0}^\infty [P]_n t^n,
\end{equation}
where $P(x)$ is a polynomial of degree one and $[P]_n := \prod_{k=0}^{n-1} P(k)$ (see Theorem \ref{Pade}).

A brief outline of the proofs is presented right after the formulation of the main results in Section \ref{sec:results}.
In addition to Theorem \ref{ResidueThm}, the last section contains some examples of the use of the main results. We shall study the sum
$$
\sum_{n=0}^\infty n! f_n,
$$
where $(f_n)_{n=0}^\infty$ is the sequence of Fibonacci numbers, and show that for any rational number $\frac{a}{b} \in \Q$ there exists a valuation of the field $\Q \! \left( \sqrt{5} \right)$ such that
$$
\sum_{n=0}^\infty n! f_n \neq \frac{a}{b}.
$$

\section{Preliminaries: Number fields and valuations}


Let $\mathbb{K}=\Q(\gamma)$ be an algebraic number field of degree $\kappa$, and let $\Z_{\mathbb{K}}$ be its ring of integers (the algebraic integers contained in $\mathbb{K}$). All the absolute values of $\mathbb{K}$ are extensions of the absolute values of $\Q$. When $p \in \mathbb{P} \cup \{\infty\}$, where $\mathbb{P}$ is the set of prime numbers, there are as many distinct extensions of $|\cdot |_p$ to $\K$ as there are irreducible factors of the minimal polynomial of $\gamma$ in $\Q_p[x]$ (see \cite[Chapter V]{Bachman}). Here $\Q_p$ denotes the topological closure of $\Q$ with respect to the metric $|\cdot|_p$, so that $\Q_\infty = \R$.

If $| \cdot |_v$ extends the standard $p$-adic metric $| \cdot |_p$ to $\mathbb{K}$, it is customary to write $v|p$, and similarly, when extending the Archimedean absolute value $| \cdot | = | \cdot |_\infty$, we write $v|\infty$. The collection of non-Archimedean valuations of $\K$ is denoted by $V_0$, and the collection of Archimedean valuations of $\K$ by $V_\infty$.

The topological closure of $\mathbb{K}$ with respect to the metric $| \cdot |_v$ is denoted by $\mathbb{K}_v$.
We also denote $\kappa_v = \left[\K_v : \Q_p\right]$ (local degree), so that $\sum_v \kappa_v = \kappa = [\K : \Q]$.

\subsection{Normalisation}

If $v|p$ for a prime $p$, there is an element $\pi \in \K$ such that $|\pi|_v < 1$ and $\langle |\pi|_v \rangle = |\K \setminus \{0\}|_v$. Then $p = u \pi^e$, where $u$ is a unit of $\K$ and $e =e_v(\K,\Q)=[|\K|_v:|\Q|_v]$ is the ramification index of the extension. It follows naturally from $|p|_p=\frac{1}{p}$ that
$$
|p|_v=\frac{1}{p}, \quad |\pi|_v=\frac{1}{p^\frac{1}{e}}
$$
However, it is convenient to use the normalisation
\begin{equation*}
\|p\|_v 
=|p|_v^{\frac{\kappa_v}{\kappa}}, \quad 
\|\pi\|_v 
=|\pi|_v^{\frac{\kappa_v}{\kappa}}.
\end{equation*}

Similarly, if $v|\infty$ and corresponds to the $i$th conjugate field $\K^{(i)}$, then we set
$$
\|x\|_v = \left|x^{(i)}\right|^{\frac{\kappa_v}{\kappa}},
$$
where $x^{(i)}$ is the $i$th conjugate of $x \in \K$. Since $\frac{\kappa_v}{\kappa} \le 1$, the triangle inequality is valid also for the normalised Archimedean absolute value.

\subsection{Product formula}

The following product formula holds for any $x \in \mathbb{K} \setminus \{0\}$:
\begin{equation}\label{prodformula}
\prod_v \|x\|_v =1,
\end{equation}
where the product is taken over all normalised, pairwise non-equivalent valuations of $\mathbb{K}$.
Note that if $x \in \Q$, then
\begin{equation}\label{valuationproperty}
\prod_{v|p} \|x\|_v = |x|_p
\end{equation}
for any $p \in \mathbb{P} \cup \{\infty\}$.

For more details on valuations, the reader is advised to consult \cite{Bachman} and \cite{Lang}.

\section{Results}\label{sec:results}

Let $m \in \Z_{\ge 1}$ and choose $m$ pairwise distinct, non-zero algebraic integers $\alpha_1, \ldots, \alpha_m \in \Z_\K \setminus \{0\}$. Denote $\overline{\alpha} = \left( \alpha_1, \ldots, \alpha_m \right)^T$. We define
$$
c_1 = c_1(\overline{\alpha}) = \prod_{v \in V_\infty} \left( \left( \max_{1 \le j \le m} \left\{ 1, \left\| \alpha_j \right\|_v \right\} \right)^m \prod_{i=1}^m \left(\|\alpha_i\|_v + \max_{1 \le j \le m} \left\{ 1, \left\| \alpha_j \right\|_v \right\} \right) \right)
$$
and
$$
c_2 = c_2 \left( \overline{\alpha}, V\right) = c_1 \prod_{v \in V} \max_{1 \le j \le m} \left\{ \left\| \alpha_j \right\|_v \right\}
$$
for any $V \subseteq V_0$

\begin{theorem}\label{mainresult}
Let $\lambda_0, \lambda_1, \ldots, \lambda_m \in \Z_{\mathbb{K}}$ be such that $\lambda_j \neq 0$ for at least one $j$.
Suppose $V \subseteq V_0$ is a collection of non-Archimedean valuations of $\mathbb{K}$ such that
\begin{equation}\label{limsupehto}
\limsup_{l \to \infty}
c_2^l (ml+m)^\kappa (ml+m)! \prod_{v \in V} \| (ml)! l! \|_v
=0.
\end{equation}
Then there exists a valuation $v' \in V$ for which
$$
\lambda_0 + \lambda_1 F_{v'}(\alpha_1) + \ldots + \lambda_m F_{v'}(\alpha_m) \neq 0.
$$
\end{theorem}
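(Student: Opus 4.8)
The plan is to argue by contradiction: assume that $\Lambda_v := \lambda_0 + \lambda_1 F_v(\alpha_1) + \ldots + \lambda_m F_v(\alpha_m) = 0$ for \emph{every} $v \in V$, and derive $1 < 1$ from the product formula \eqref{prodformula}. Feed $P(x) = x+1$ into Theorem \ref{Pade} (so that $[P]_n = n!$ and $G = F$), and, choosing the free parameters suitably, produce for each $l \in \Z_{\ge 1}$ explicit Pad\'e-type approximation forms attached to the $m$ points: a common denominator polynomial $B_l$ and numerators $A_{1,l}, \ldots, A_{m,l}$ with coefficients in $\Z_\K$ (after, if necessary, clearing denominators by a rational integer whose size is absorbed into the factor $(ml+m)!$ below), such that the remainders $R_{j,l}(t) := B_l(t) F(\alpha_j t) - A_{j,l}(t)$ vanish at $t = 0$ to the prescribed high order. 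Putting $t = 1$, multiplying the $j$th identity by $\lambda_j$, summing over $j$, and adding $\lambda_0 B_l(1)$, one obtains the single identity
$$
B_l(1)\,\Lambda_v = \Omega_l + \rho_{l,v}, \qquad \Omega_l := \lambda_0 B_l(1) + \sum_{j=1}^m \lambda_j A_{j,l}(1) \in \Z_\K, \qquad \rho_{l,v} := \sum_{j=1}^m \lambda_j R_{j,l}(1),
$$
valid in every $\K_v$. Under the contradiction hypothesis the left-hand side vanishes for $v \in V$, so $\Omega_l = -\rho_{l,v}$, and in particular $\|\Omega_l\|_v = \|\rho_{l,v}\|_v$, for all $v \in V$.

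Next I would estimate $\|\Omega_l\|_v$ place by place. For a non-Archimedean $v \notin V$ one has $\|\Omega_l\|_v \le 1$ because $\Omega_l \in \Z_\K$. For $v \in V$, the $v$-adic size of each $R_{j,l}(1)$ is read off from its explicit series in Theorem \ref{Pade} via the ultrametric inequality: the factorials surviving in the coefficients contribute $\|(ml)!\,l!\|_v$, while the remaining factors (powers of $\alpha_j$, and the $\lambda_j$) are $\le 1$ since $\alpha_j, \lambda_j \in \Z_\K$, so that $\|\Omega_l\|_v \le C_\lambda\,\|(ml)!\,l!\|_v\,\max_{j}\|\alpha_j\|_v$ for an $l$-independent constant $C_\lambda$. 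For an Archimedean $v$, bounding $B_l(1)$ and the $A_{j,l}(1)$ by (number of coefficients)$\times$(largest coefficient)$\times$(largest occurring power of the $\alpha_j$) gives $\|\Omega_l\|_v \le C_\lambda\,(ml+m)\,(ml+m)!$ times the factor attached to $v$ in the definition of $c_1$. Multiplying these bounds over all $v$ and using \eqref{prodformula}, the ``number of coefficients'' factors over the at most $\kappa$ Archimedean places combine into $(ml+m)^\kappa$, and the $\overline\alpha$-factors combine into $c_2^{\,l}$ (those over $V_\infty$ giving $c_1^{\,l}$, those over $V$ giving $\big(\prod_{v\in V}\max_j\|\alpha_j\|_v\big)^l$), whence
$$
1 = \prod_v \|\Omega_l\|_v \le C_\lambda\; c_2^{\,l}\,(ml+m)^\kappa\,(ml+m)!\,\prod_{v \in V}\|(ml)!\,l!\|_v .
$$
Since the terms are non-negative, hypothesis \eqref{limsupehto} forces the right-hand side to $0$ as $l \to \infty$; so it is $< 1$ for all large $l$, which is a contradiction \emph{provided} $\Omega_l \neq 0$ for arbitrarily large $l$.

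Thus the crux — and what I expect to be the main obstacle — is the non-vanishing of $\Omega_l$: a priori the fixed nontrivial vector $(\lambda_0, \ldots, \lambda_m)$ could annihilate the approximation form. I would treat this in the classical Hermite manner, exploiting the freedom in the remainder orders supplied by Theorem \ref{Pade}: among the $m+1$ forms coming from consecutive parameters $l, l+1, \ldots, l+m$, the matrix $(B_{l+i}(1),\, A_{1,l+i}(1), \ldots, A_{m,l+i}(1))_{0 \le i \le m}$ is non-singular — its determinant being computable from the explicit formulas of Theorem \ref{Pade} and visibly nonzero — so $(\lambda_0, \ldots, \lambda_m) \neq 0$ cannot annihilate all of $\Omega_l, \ldots, \Omega_{l+m}$; hence $\Omega_l \neq 0$ for infinitely many $l$. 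Picking such an $l$ large enough that the last displayed bound is $< 1$ yields $1 < 1$, the desired contradiction. What then remains is routine bookkeeping — matching the crude estimates above to the precise constants $c_1, c_2$ and the factorial factors of \eqref{limsupehto}, once the explicit Pad\'e data of Theorem \ref{Pade} is in hand.
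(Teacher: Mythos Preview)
Your overall architecture matches the paper exactly: assume $\Lambda_v=0$ for all $v\in V$, feed $P(x)=x+1$ into Theorem~\ref{Pade}, form the algebraic integer $\Omega_l$ (the paper's $W(l,\mu)$), estimate it at every place, and contradict the product formula via hypothesis~\eqref{limsupehto}. The Archimedean and non-Archimedean bounds you sketch are precisely those carried out in Section~\ref{sec:estimates}, and your final displayed inequality is the paper's~\eqref{tuloyht} combined with~\eqref{lastestimate}.

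The divergence --- and the gap --- is in the non-vanishing step. The paper does \emph{not} shift the principal parameter $l$. Theorem~\ref{Pade} carries a second free integer $\mu\in\{0,1,\ldots,m\}$ which, for fixed $l$, raises $\deg A_{\overline l,\mu,j}$ and $\ord R_{\overline l,\mu,j}$ each by exactly one. This produces an $(m{+}1)\times(m{+}1)$ matrix $\bigl(B_{l,\mu,i}(t)\bigr)_{0\le\mu,i\le m}$ whose determinant, by the degree/order bookkeeping of Lemma~\ref{detlemma}, is forced to be a \emph{monomial} in $t$; its single coefficient is then computed explicitly as a product of factorials, powers of the $\alpha_j$, and a Vandermonde determinant, hence nonzero. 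That computation is the real content, and Lemma~\ref{W-lemma} reads off from it that some $W(l,\mu)\ne0$ for every $l$.

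Your alternative --- taking the forms at $l,l+1,\ldots,l+m$ --- is not the same argument, and ``visibly nonzero'' does not substitute for one. Replacing $l$ by $l+i$ moves $\deg B$ by $m$ and $\ord R_j$ by $m+1$ at each step, so the degree/order staircase no longer forces the determinant to be a monomial, and there is no evident closed form for a surviving coefficient. It may be nonzero, but proving it would require a separate (and less tractable) calculation than the one the paper actually does. If you want your sketch to go through, keep the $\mu$-parameter from Theorem~\ref{Pade} and invoke Lemmas~\ref{detlemma}--\ref{W-lemma} verbatim; the rest of your outline then coincides with the paper's proof.
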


\begin{remark}
Let us show that any collection $V \subseteq V_0$ whose complement in $V_0$ is finite satisfies condition \eqref{limsupehto}. Choose $v_1, \ldots, v_k \in V_0$ and let $V=V_0 \setminus \{v_1, \ldots, v_k\}$. Suppose in addition that $v_i|p_i$ for some $p_i \in \mathbb{P}$, $i=1,\ldots,k$. Then, by recalling that
\begin{equation}\label{kertoma-arvio}
|n!|_p \ge p^{-\frac{n}{p-1}},
\end{equation}
and using the product formula \eqref{prodformula}, we get
\begin{align*}
&c_2^l (ml+m)^\kappa (ml+m)! \prod_{v \in V} \| (ml)! l! \|_v \\
= \;&\frac{c_2^l (ml+m)^\kappa (ml+m)!}{\left( \prod_{i=1}^k \|(ml)!l!\|_{v_i} \right) \prod_{v \in V_\infty} \| (ml)! l! \|_v} \\
= \;&\frac{c_2^l (ml+m)^\kappa (ml+m)!}{\left( \prod_{i=1}^k |(ml)!l!|_{p_i}^{\frac{\kappa_{v_i}}{\kappa}} \right) (ml)! l!} \\
\le \;&\frac{c_2^l \left( \prod_{i=1}^k p_i^{\frac{\kappa_{v_i}}{\kappa} \cdot \frac{ml+l}{p_i-1}} \right) (ml+m)^\kappa (ml+m)!}{(ml)! l!} \\
= \;&\frac{\left( c_2 \prod_{i=1}^k p_i^{\frac{\kappa_{v_i}}{\kappa} \cdot \frac{m+1}{p_i-1}} \right)^l (ml+m)^\kappa (ml+1)\cdots(ml+m)}{l!} \overset{l \to \infty}{\to} 0.
\end{align*}
\end{remark}

\begin{remark}
From the previous remark it follows that there are infinitely many valuations $v \in V_0$ such that $\Lambda_v \neq 0$.
\end{remark}

\begin{theorem}\label{mainresult2}
Let $\log H \ge se^s$, where $s = \max \left\{ e^\kappa +1, c_1+1, (m+3)^2+1 \right\}$, $\kappa = [\K : \Q]$. Suppose that $\lambda_0, \lambda_1, \ldots \lambda_m \in \Z_\K$ are such that at least one of them is non-zero and
\begin{equation*}
\prod_{v \in V_\infty} \max_{0 \le i \le m} \left\{ \| \lambda_i \|_v \right\} \le H.
\end{equation*}
Then there exists a prime
\begin{equation*}\label{interval}
p \in \left] \log \left( \frac{\log H}{\log \log H} \right), \frac{17m\log H}{\log \log H} \right[
\end{equation*}
and a valuation $v'|p$ for which
\begin{equation}\label{lowerbound}
\left\| \lambda_0 + \lambda_1 F_{v'}(\alpha_1) + \ldots + \lambda_m F_{v'}(\alpha_m) \right\|_{v'} > H^{-(m+1) - 114 m^2 \cdot  \frac{\log \log \log H}{\log \log H}}.
\end{equation}
\end{theorem}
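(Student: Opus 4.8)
The plan is to derive Theorem \ref{mainresult2} from the same Padé machinery as Theorem \ref{mainresult}, but now tracking all constants explicitly as functions of a prime parameter $p$ and the order parameter $l$, and then optimising. First I would invoke the explicit Padé approximations of Theorem \ref{Pade} applied to each of the $m$ generalised factorial series attached to $F_v(\alpha_j)$ (via the substitution $t \mapsto \alpha_j t$, so $P(x)=x+1$ scaled appropriately), with a common remainder-order parameter $l$. This yields, for each prime $p$ and each valuation $v\mid p$, a family of $m+1$ linearly independent approximation forms $R^{(0)}_v,\dots,R^{(m)}_v$ with polynomial coefficients $B^{(i)}_{v,j}\in\Z_\K$ whose $v$-adic and Archimedean sizes are controlled: the Archimedean size grows like $c_1^l (ml+m)!$ up to the $(ml+m)^\kappa$ factor, and the $v$-adic remainder is divisible by a high power of $p$ coming from $\|(ml)!\,l!\|_v$.

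The crux is the determinant/linear-algebra argument: form the $(m+1)\times(m+1)$ matrix whose first row is $(\lambda_0,\lambda_1,\dots,\lambda_m)$ and whose remaining rows are the coefficient vectors of the Padé forms. If $\Lambda_{v'}=0$ for every $v'\mid p$, then at each such $v'$ the vector $(\lambda_0,\dots,\lambda_m)$ lies in the span of a proper subspace, forcing the determinant $\Delta_p$ (an element of $\Z_\K$, nonzero because the Padé forms are independent and the $\lambda_i$ are not all zero — here one must check the determinant does not vanish, as in the proof of Theorem \ref{mainresult}) to have small $\|\cdot\|_{v'}$ for all $v'\mid p$, while its Archimedean and remaining non-Archimedean sizes are bounded by $H$ times the explicit growth factor. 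Then the product formula \eqref{prodformula} applied to $\Delta_p\in\Z_\K\setminus\{0\}$ gives
\begin{equation*}
1=\prod_v\|\Delta_p\|_v\le H\cdot c_1^{l}\,(ml+m)^{\kappa}\,(ml+m)!\cdot\prod_{v\mid p}\|(ml)!\,l!\|_v\cdot\|\Lambda_{v'}\|_{v'}^{-1}\cdot(\text{minor bounds}),
\end{equation*}
which after rearranging produces a lower bound for $\max_{v'\mid p}\|\Lambda_{v'}\|_{v'}$ of the shape $H^{-1}\big(c_1^l(ml+m)!\,|(ml)!\,l!|_p\big)^{-1}$ up to polynomial factors — provided the right-hand side, i.e. the "$v'$-denominator" $\prod_{v\mid p}\|(ml)!l!\|_v=|(ml)!l!|_p$, is small enough to beat $H$, which is exactly where $p$ must be chosen of size roughly $\log H/\log\log H$ so that, by Legendre's formula, $|(ml)!l!|_p = p^{-(v_p((ml)!)+v_p(l!))}$ with $v_p(n!)\approx n/(p-1)$ makes $c_1^l(ml+m)!/|(ml)!l!|_p^{-1}$ comparable to a power of $H$.

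Concretely, I would set $l\asymp \log H/(\kappa\log\log H)$ and $p$ ranging over primes near $p\asymp \log H/\log\log H$; using $v_p(n!)\le n/(p-1)$ on the numerator side and $(ml+m)!\le (ml+m)^{ml+m}$ together with Stirling, the logarithm of the error factor is $O(ml\log(ml)) = O\!\big(\frac{(\log H)^2}{\log\log H}\cdot\frac{\log\log H}{\log\log H}\big)$; squeezing the constants and invoking Bertrand's postulate (or Chebyshev-type bounds) to guarantee a prime in the stated interval $\big]\log(\log H/\log\log H),\,17m\log H/\log\log H\big[$ yields the exponent $-(m+1)-114m^2\cdot\frac{\log\log\log H}{\log\log H}$, the $\log\log\log H$ arising precisely from the $\log(ml+m)\sim\log\log\log H$ term after the main $\log H/\log\log H$ factors cancel. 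The hypothesis $\log H\ge se^s$ is what makes $l\ge 1$, keeps $p$ below $\sqrt{l}$-type thresholds needed for the factorial $p$-adic estimates to dominate, and absorbs the additive constants from $c_1$, $e^\kappa$, and $(m+3)^2$ into clean inequalities.

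The main obstacle I anticipate is the simultaneous optimisation: one must choose $l$ and the prime $p$ so that three competing quantities — the Archimedean growth $c_1^l(ml+m)!$, the inverse $p$-adic factorial factor $|(ml)!l!|_p^{-1}$, and the height budget $H$ — balance, and then show the resulting bound is not merely nonvanishing but of the claimed explicit strength, with every implied constant made numerical (hence the specific $17$, $114$). Handling the non-vanishing of the determinant $\Delta_p$ uniformly in $p$ (ruling out degenerate cancellation among the $\lambda_i$ and the Padé coefficients) is the delicate structural point, resolved exactly as in Theorem \ref{mainresult} by exploiting that the $m+1$ Padé forms of distinct remainder orders are $\K(z)$-linearly independent, so their coefficient matrix has rank $m+1$ and cannot be annihilated by a single nonzero $(\lambda_i)$-vector simultaneously at all $v'\mid p$ unless $\Delta_p\ne 0$.
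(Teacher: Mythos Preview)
Your proposal has a fundamental gap: you work with a \emph{single} prime $p$ and use only $\prod_{v\mid p}\|(ml)!\,l!\|_v=|(ml)!\,l!|_p$ on the non-Archimedean side. This cannot cancel the Archimedean factor $(ml+m)!$. Indeed, for one prime $p$ of size about $\log H/\log\log H$ and $l$ of the same order, Legendre's formula gives $|(ml)!\,l!|_p\ge p^{-(m+1)l/(p-1)}\approx p^{-(m+1)}$, a bounded quantity, while $\log((ml+m)!)\approx ml\log l\approx m\log H$. The product formula inequality you wrote down therefore reads roughly $1\le H\cdot H^m\cdot p^{-(m+1)}$, which is no contradiction at all, and no lower bound for $\|\Lambda_{v'}\|_{v'}$ emerges.

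The paper's proof instead takes an entire \emph{interval} of primes $[\log(\ell+1),\,m(\ell+2)]$ and assumes the bad inequality $\|b_{\ell+1,\mu,0}\Lambda_v\|_v<\|\sum_j\lambda_j s_{\ell+1,\mu,j}\|_v$ for \emph{every} $v\mid p$ with $p$ in that interval. Then $\|W\|_v=\|\sum_j\lambda_j s_{\ell+1,\mu,j}\|_v$ for all those $v$, and the product formula for the scalar $W=\sum_i\lambda_i b_{\ell+1,\mu,i}\in\Z_\K\setminus\{0\}$ (not a determinant) yields
\[
1\le (m+1)^\kappa(m(\ell+2))^\kappa H\,c_1^{\ell+1}(m(\ell+1)+\mu)!\prod_{p\in[\log(\ell+1),\,m(\ell+2)]}|(m(\ell+1)+\mu)!\,(\ell+1)!|_p.
\]
Since the interval reaches up to $m(\ell+2)$, the product $\prod_p|(m(\ell+1)+\mu)!|_p$ captures \emph{all} primes dividing $(m(\ell+1)+\mu)!$, and the full product formula converts the right-hand side into $(m+1)^\kappa(m(\ell+2))^\kappa Hc_1^{\ell+1}/\big((\ell+1)!\prod_{p<\log(\ell+1)}|(m(\ell+1)+\mu)!(\ell+1)!|_p\big)$. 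Only the small primes $p<\log(\ell+1)$ survive uncancelled, contributing a factor bounded via $\sum_{p<\log(\ell+1)}\frac{\log p}{p-1}<2\log\log(\ell+1)$; \emph{this} is the true source of the $\log\log\log H$, not $\log(ml+m)$ as you claim (note $\log(ml+m)\approx\log\log H$, not $\log\log\log H$). The choice of $\ell$ via $N(\ell)\ge0>N(\ell+1)$ then forces the contradiction, so some $v'\mid p$ in the interval satisfies $\|W\|_{v'}\le\|\Lambda_{v'}\|_{v'}$, and a second application of the product formula to $W$ alone gives the lower bound.

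In short: you must replace ``a single prime $p$'' by ``all primes in an interval whose upper endpoint exceeds $m(\ell+2)$'', so that the factorial cancellation goes through; and the object to which the product formula is applied is the linear combination $W$, with the determinant of Lemma~\ref{detlemma} used only to guarantee $W(\ell+1,\mu)\neq0$ for some $\mu$.
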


The idea behind the following proofs is to use Pad\'e approximations to construct small linear forms
$$
s_{l,\mu,j} = b_{l,\mu,0} F_v(\alpha_j) - b_{l,\mu,j}, \quad b_{l,\mu,0}, b_{l,\mu,j} \in \Z_{\K}, \quad j=1,\ldots,m,
$$
in the numbers $F(\alpha_j)$. (Here $l \in \Z_{\ge 1}$ and $\mu \in \{0,1,\ldots,m\}$ are auxiliary parameters.) With these equations the linear form
$$
\Lambda_v = \lambda_0 + \lambda_1 F_v(\alpha_1) + \ldots + \lambda_m F_v(\alpha_m)
$$
under study can be written as
\begin{equation}\label{bLambda}
b_{l,\mu,0} \Lambda_v = W + \lambda_1 s_{l,\mu,1} + \ldots + \lambda_m s_{l,\mu,m},
\end{equation}
where $W=W(l,\mu) = \sum_{i=0}^m \lambda_i b_{l,\mu,i}$ is an integer element in $\K$. In case it is non-zero, the product formula implies
\begin{equation}\label{W-prod}
1 = \prod_{v} \|W\|_v.
\end{equation}

In the proof of Theorem \ref{mainresult}, we shall assume that $\Lambda_v = 0$ for all $v \in V$ (whence equation \eqref{bLambda} gives $W$ another representation as a linear combination of $s_{l,\mu,i}$), and then aim at a contradiction by estimating the product $\prod_{v} \|W\|_v$ from above. For this we need estimates for the Pad\'e coefficients $b_{l,\mu,i}$, $s_{l,\mu,i}$, expressed in terms of the auxiliary parameter $l$. These are very roughly
$$
\| b_{l,\mu,i} \|_v \approx (ml)!, \quad i=0,1,\ldots,m, \quad v|\infty,
$$
$$
\| s_{l,\mu,j} \|_v \approx (ml)!l!, \quad j=1,\ldots,m, \quad v \in V_0.
$$
The contradiction with \eqref{W-prod} is reached via the condition \eqref{limsupehto} when $l$ is taken to infinity.

When the target is a precise lower bound for $\| \Lambda_v \|_v$, the use of the parameter $l$ also becomes more subtle: We define the number $\ell$ so that it is the largest $l$ for which the expression
$$
N(l) \approx \log H + m l \log \log l - l \log l
$$
is still positive. Then we make the assumption that
$$
\left\| b_{\ell + 1,\mu,0} \Lambda_v \right\|_v < \left\| \lambda_1 s_{\ell + 1,\mu,1} + \ldots + \lambda_m s_{\ell + 1,\mu,m} \right\|_v
$$
for all $v | p$, $p \in [\log (\ell + 1),m(\ell + 2)] \cap \mathbb{P}$. This leads to the estimate
\begin{equation*}
0 \le \log \left( \prod_{v} \|W(\ell+1,\mu)\|_v \right) \approx \log H + m (\ell+1) \log \log (\ell+1) - (\ell+1) \log (\ell+1) < 0,
\end{equation*}
giving the desired contradiction. It follows that there exists a prime
\begin{equation}\label{interval3}
p \in [\log (\ell + 1),m(\ell + 2)]
\end{equation}
and a valuation $v'|p$ such that
$$
\| W(\ell+1,\mu) \|_{v'} \le \| \Lambda_{v'} \|_{v'},
$$
leading to
$$
1 \le \left( \prod_{v \in V_\infty} \| W(\ell+1,\mu) \|_v \right) \left\| \Lambda_{v'} \right\|_{v'}.
$$
This is the key to the lower bound for $\| \Lambda_{v'} \|_{v'}$, and the final step is to give an estimate for the product $\prod_{v \in V_\infty} \| W(\ell+1,\mu) \|_v$. Approximately it is
\begin{equation}\label{bound}
\log \left( \prod_{v \in V_\infty} \| W(\ell+1,\mu) \|_v \right) \approx (m+1)\log H + m^2 \ell \log \log \ell.
\end{equation}
The definition of $\ell$ gives a connection between $\ell$ and $H$, enabling us to write the bound \eqref{bound} and the interval \eqref{interval3} solely in terms of $H$:
$$
\ell \log \log \ell \approx \frac{\log \log \log H}{\log \log H} \cdot \log H.
$$

As the attentive reader may have noted, one crucial point in the proofs is the non-vanishing of the quantity $W(l,\mu)$. This is the part where the auxiliary parameter $\mu$ is needed. A non-vanishing determinant of the Pad\'e polynomials will ensure that for each $l \in \Z_{\ge 1}$, there exists a $\mu \in \{ 0,1,\ldots,m \}$ such that $W(l,\mu) \neq 0$.

\section{Pad\'e approximations}

Let $m \in \Z_{\ge 1}$, $\overline{l} = (l_1, \ldots, l_m)^T \in \Z_{\ge 1}^m$, and $L := \sum_{j=1}^m l_j$. For a given vector $\overline{\beta} = (\beta_1, \ldots, \beta_m )^T$, define the numbers $\sigma_i = \sigma_i \!\left( \overline{l}, \overline{\beta} \right)$ by the equation
\begin{equation}\label{sigmadef}
\prod_{j=1}^m (\beta_j -w)^{l_j} = \sum_{i=0}^L \sigma_i w^i.
\end{equation}
Then, by the binomial theorem,
$$
\sigma_i \!\left( \overline{l}, \overline{\beta} \right) = (-1)^i \sum_{i_1 + \ldots + i_m =i} \binom{l_1}{i_1} \cdots \binom{l_m}{i_m} \cdot \beta_1^{l_1-i_1} \cdots \beta_m^{l_m-i_m}.
$$
\begin{lemma}
We have
\begin{equation}\label{sigmaiproperty}
\sum_{i=0}^L \sigma_i i^k \beta_j^i =0
\end{equation}
for all $j \in \{ 1, \ldots, m \}$, $k \in \{0,1,\ldots,l_j-1\}$.
Moreover, when $\overline{\beta} = (\beta_1, \ldots, \beta_m )^T \in \mathbb{K}^m$ and $\| \cdot \|_v$ is any Archimedean absolute value of the field $\mathbb{K}$, we have
\begin{equation}\label {property2}
\sum_{i=0}^{L} \|\sigma_i\|_v t^i \le \prod_{j=1}^m (\|\beta_j\|_v+t)^{l_j}
\end{equation}
for $t \ge 0$.
\end{lemma}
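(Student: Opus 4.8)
The plan is to regard
\[
\Phi(w) := \sum_{i=0}^L \sigma_i w^i = \prod_{j=1}^m (\beta_j - w)^{l_j}
\]
as a polynomial in the single variable $w$ (note $\sigma_i = 0$ for $i > L$, so every sum below is finite) and to exploit that $\beta_j$ is a zero of $\Phi$ of multiplicity at least $l_j$. For \eqref{sigmaiproperty}, introduce the differential operator $\theta = w\frac{d}{dw}$, which acts on monomials by $\theta(w^i) = i\,w^i$, hence $\theta^k(w^i) = i^k w^i$. Applying $\theta^k$ term by term to $\Phi(w) = \sum_{i=0}^L \sigma_i w^i$ gives
\[
(\theta^k\Phi)(w) = \sum_{i=0}^L \sigma_i\, i^k w^i ,
\]
so the left-hand side of \eqref{sigmaiproperty} equals $(\theta^k\Phi)(\beta_j)$. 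It therefore suffices to prove that $\theta^k\Phi$ vanishes at $\beta_j$ when $0 \le k \le l_j-1$.

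For this I would record the elementary fact that if a polynomial $R$ has a zero of order $\ge r$ at a point $\beta$, then $\theta R = wR'$ has a zero of order $\ge r-1$ at $\beta$ (for $\beta \neq 0$ because differentiation drops the order by exactly one while $w$ is a nonzero constant there; for $\beta = 0$ by a direct inspection of $R = w^r S$). Iterating this $k$ times, $\theta^k\Phi$ has a zero of order $\ge l_j - k \ge 1$ at $\beta_j$, whence $(\theta^k\Phi)(\beta_j) = 0$. (Alternatively one may expand $\theta^k = \sum_{s=0}^k S(k,s)\, w^s \frac{d^s}{dw^s}$ using Stirling numbers $S(k,s)$ of the second kind and use $\Phi^{(s)}(\beta_j) = 0$ for all $s \le k \le l_j - 1$.)

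For \eqref{property2}, I would argue directly from the explicit expansion of $\sigma_i$ displayed just before the lemma. Up to the sign $(-1)^i$, $\sigma_i$ is a sum of products of the $\beta_j$ with nonnegative integer coefficients, so the triangle inequality and multiplicativity of the normalised Archimedean absolute value $\|\cdot\|_v$ (both valid after normalisation, since $\kappa_v/\kappa \le 1$) give
\[
\|\sigma_i\|_v \le \sum_{i_1 + \cdots + i_m = i} \binom{l_1}{i_1}\cdots\binom{l_m}{i_m}\, \|\beta_1\|_v^{\,l_1 - i_1}\cdots\|\beta_m\|_v^{\,l_m - i_m}
\]
for every $i \in \{0,1,\dots,L\}$. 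Multiplying by $t^i \ge 0$, summing over $i$, re-indexing the resulting double sum by $(i_1,\dots,i_m)$, and recognising a product of $m$ binomial expansions yields
\[
\sum_{i=0}^L \|\sigma_i\|_v\, t^i \le \prod_{j=1}^m \left( \sum_{i_j=0}^{l_j} \binom{l_j}{i_j} t^{i_j} \|\beta_j\|_v^{\,l_j - i_j} \right) = \prod_{j=1}^m \bigl( \|\beta_j\|_v + t \bigr)^{l_j},
\]
which is \eqref{property2}.

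Neither half is deep; the only place demanding a little care is the order-of-vanishing bookkeeping for $\theta^k$ in the first part — especially the degenerate case $\beta_j = 0$, where the order of vanishing does not decrease under $\theta$ — and, in the second part, making sure every manipulation of $\|\cdot\|_v$ uses only the triangle inequality and multiplicativity, which are exactly the properties the normalisation preserves.
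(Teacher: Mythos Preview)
Your proof is correct and essentially the same as the paper's. For \eqref{sigmaiproperty} the paper writes $\theta^k$ explicitly as $\sum_{s=1}^k a_{k,s}\, w^s \frac{d^s}{dw^s}$ (your Stirling-number alternative) and then uses $\Phi^{(s)}(\beta_j)=0$ for $1\le s\le k$, whereas your primary argument tracks the order of vanishing of $\theta^k\Phi$ inductively; both amount to the same thing. For \eqref{property2} your argument is identical to the paper's.
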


\begin{proof}
It is not too hard to deduce that
\begin{equation}\label{deriv}
\left( x \frac{\mathrm{d}}{\mathrm{d}x} \right)^n f(x) = \sum_{i=1}^n a_{n,i} x^i \left( \frac{\mathrm{d}}{\mathrm{d}x} \right)^i f(x),
\end{equation}
where the coefficients $a_{n,i}$ satisfy the recursions
$$
\begin{cases}
a_{n,1}=1; \\
a_{n,i} = a_{n-1,i-1} + i a _{n-1,i}, \quad i = 2,\ldots,n-1; \\
a_{n,n}=1
\end{cases}
$$
for all $n \in \Z_{\ge 1}$. Let now $j \in \{1,\ldots,m\}$. For $k=0$, the claim \eqref{sigmaiproperty} follows directly from the definition \eqref{sigmadef}. For $k \in \{1,\ldots,l_j-1\}$, we use \eqref{sigmadef} and \eqref{deriv}:
\begin{equation*}
\begin{split}
\sum_{i=0}^L \sigma_i i^k \beta_j^i &= \left( w \frac{\mathrm{d}}{\mathrm{d}w} \right)^k \prod_{i=1}^m (\beta_i -w)^{l_i} \bigg|_{w=\beta_j} =0
\end{split}
\end{equation*}
because $\left( \frac{\mathrm{d}}{\mathrm{d}w} \right)^h \prod_{i=1}^m (\beta_i -w)^{l_i} \Big|_{w=\beta_j} =0$ for all $h \in \{1,\ldots,k\}$.

Property \eqref{property2} follows simply from the expansion of $\sigma_i$ and the triangle inequality:
\begin{align*}
\sum_{i=0}^{L} \|\sigma_i\|_v t^i &\le \sum_{i=0}^L \left( \sum_{i_1 + \ldots + i_m =i} \left\| \binom{l_1}{i_1} \cdots \binom{l_m}{i_m} \right\|_v \cdot \|\beta_1\|_v^{l_1-i_1} \cdots \|\beta_m\|_v^{l_m-i_m} \right) t^i \\
&\le \sum_{i=0}^L \left( \sum_{i_1 + \ldots + i_m =i} \binom{l_1}{i_1} \cdots \binom{l_m}{i_m} \cdot \|\beta_1\|_v^{l_1-i_1} \cdots \|\beta_m\|_v^{l_m-i_m} \right) t^i \\
&= \prod_{j=1}^m (\left\|\beta_j\right\|_v+t)^{l_j}
\end{align*}
when $t \ge 0$.
\end{proof}

\subsection{Generalised factorial series}

When $l_1=l_2=\ldots=l_m$, the following theorem is a particular case of Theorem 2.2 in \cite{Ma11}. Due to the special nature of the function \eqref{genfacser}, however, we don't need to restrict the parameters $l_j$.

\begin{theorem}\label{Pade}
Let $G(t) = \sum_{n=0}^\infty [P]_n t^n$, where $P(x)$ is a polynomial of degree one and $[P]_n = \prod_{k=0}^{n-1} P(k)$. Let $\mu \in \Z_{\ge 0}$ and set
$$
A_{\overline{l}, \mu, 0}(t) = \sum_{i=0}^L \frac{\sigma_i \!\left( \overline{l}, \overline{\beta} \right)}{[P]_{i+\mu}} t^{L-i}.
$$
Then there exist polynomials $A_{\overline{l}, \mu, j}(t)$ and remainders $R_{\overline{l}, \mu, j}(t)$, $j=1,\ldots,m$, such that
\begin{equation}\label{Padeformula}
A_{\overline{l}, \mu, 0}(t) G(\beta_jt) - A_{\overline{l}, \mu, j}(t) = R_{\overline{l}, \mu, j}(t),
\end{equation}
where
\begin{equation}\label{degreesorders}
\begin{cases}
\deg A_{\overline{l}, \mu, 0}(t) = L, \\
\deg A_{\overline{l}, \mu, j}(t) \le L+\mu-1, \\
\ord R_{\overline{l}, \mu, j}(t) \ge L+\mu+l_j.
\end{cases}
\end{equation}
\end{theorem}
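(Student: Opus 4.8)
The plan is to obtain \eqref{Padeformula} by the standard Pad\'e device: form the formal power series $A_{\overline{l},\mu,0}(t)\,G(\beta_j t)$, declare $A_{\overline{l},\mu,j}(t)$ to be its truncation to degrees $\le L+\mu-1$ and $R_{\overline{l},\mu,j}(t)$ the remaining tail, so that \eqref{Padeformula} holds tautologically and $\deg A_{\overline{l},\mu,j}(t)\le L+\mu-1$ is automatic. The only real content is then the lower bound $\ord R_{\overline{l},\mu,j}(t)\ge L+\mu+l_j$, and this will come from the vanishing identities \eqref{sigmaiproperty} for the numbers $\sigma_i$.

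Concretely, I would write $A_{\overline{l},\mu,0}(t)=\sum_{i=0}^{L}\frac{\sigma_i}{[P]_{i+\mu}}\,t^{L-i}$ (here and below I tacitly assume, as the very shape of $A_{\overline{l},\mu,0}$ requires, that $[P]_n\neq0$ for every $n\ge0$) and $G(\beta_j t)=\sum_{n\ge0}[P]_n\beta_j^n t^n$, and expand the product as $A_{\overline{l},\mu,0}(t)\,G(\beta_j t)=\sum_{N\ge0}c_N^{(j)}t^N$. For every $N\ge L$ exactly the $L+1$ monomials $t^{L-i}$, $i=0,\dots,L$, of $A_{\overline{l},\mu,0}$ contribute and all indices are legitimate, giving
$$
c_N^{(j)}=\sum_{i=0}^{L}\frac{\sigma_i}{[P]_{i+\mu}}\,[P]_{N-L+i}\,\beta_j^{\,N-L+i}.
$$
Setting $A_{\overline{l},\mu,j}(t):=\sum_{N=0}^{L+\mu-1}c_N^{(j)}t^N$ and $R_{\overline{l},\mu,j}(t):=\sum_{N\ge L+\mu}c_N^{(j)}t^N$ yields \eqref{Padeformula} and the bound on $\deg A_{\overline{l},\mu,j}$; moreover $\deg A_{\overline{l},\mu,0}(t)=L$ because its leading coefficient $\sigma_0/[P]_\mu=\big(\prod_{i=1}^m\beta_i^{\,l_i}\big)/[P]_\mu$ is nonzero, the $\beta_i$ (i.e.\ the $\alpha_i$) being nonzero by hypothesis.

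It remains to check that $c_N^{(j)}=0$ for $L+\mu\le N\le L+\mu+l_j-1$. Writing such an $N$ as $N=L+\mu+s$ with $0\le s\le l_j-1$, the ratio $[P]_{N-L+i}/[P]_{i+\mu}=[P]_{i+\mu+s}/[P]_{i+\mu}$ telescopes to $\prod_{k=0}^{s-1}P(i+\mu+k)$, so that
$$
c_N^{(j)}=\beta_j^{\,\mu+s}\sum_{i=0}^{L}\sigma_i\Big(\prod_{k=0}^{s-1}P(i+\mu+k)\Big)\beta_j^{\,i}.
$$
Since $P$ has degree one, $\prod_{k=0}^{s-1}P(i+\mu+k)$ is a polynomial in $i$ of degree $s\le l_j-1$, hence a $\K$-linear combination of $1,i,\dots,i^{\,l_j-1}$; applying \eqref{sigmaiproperty} for $k=0,1,\dots,l_j-1$ term by term forces $c_N^{(j)}=0$. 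This last point is also where the exponent $L+\mu+l_j$ in \eqref{degreesorders} is precisely calibrated: it is the largest order of vanishing compatible with the range $k\le l_j-1$ covered by \eqref{sigmaiproperty}. I expect no genuine obstacle beyond bookkeeping here --- verifying that for $N\ge L$ the Cauchy product collapses to the displayed finite sum, that all indices $i+\mu+s$ remain in $\Z_{\ge0}$ (so the $[P]$'s and the telescoping make sense), and that the auxiliary polynomial's degree never exceeds $l_j-1$.
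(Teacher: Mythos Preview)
Your proof is correct and follows essentially the same approach as the paper: multiply out $A_{\overline{l},\mu,0}(t)G(\beta_jt)$, define $A_{\overline{l},\mu,j}$ as the truncation and $R_{\overline{l},\mu,j}$ as the tail, and verify the vanishing of the coefficients $c_N^{(j)}$ for $N=L+\mu+s$, $0\le s\le l_j-1$, by telescoping $[P]_{i+\mu+s}/[P]_{i+\mu}$ into a degree-$s$ polynomial in $i$ and invoking \eqref{sigmaiproperty}. Your explicit remarks on the nonvanishing assumptions ($[P]_n\neq0$ and $\sigma_0\neq0$) are a welcome addition that the paper leaves implicit.
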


\begin{proof}
Writing
$$
A_{\overline{l}, \mu, 0}(t) = \sum_{h=0}^L \frac{\sigma_{L-h} \!\left( \overline{l}, \overline{\beta} \right)}{[P]_{L-h+\mu}} t^h,
$$
we have
$$
A_{\overline{l}, \mu, 0}(t) G(\beta_jt) = \sum_{N=0}^\infty r_{N,j} t^N,
$$
where
\begin{equation}\label{remaindercoef}
r_{N,j} = \sum_{n+h=N} \sigma_{L-h} \!\left( \overline{l}, \overline{\beta} \right) \cdot  \frac{[P]_n}{[P]_{L-h+\mu}} \cdot \beta_j^n = \sum_{h=0}^{\min \{L,N\}} \sigma_{L-h} \!\left( \overline{l}, \overline{\beta} \right) \cdot \frac{[P]_{N-h}}{[P]_{L-h+\mu}} \cdot \beta_j^{N-h}.
\end{equation}
When $N=L+\mu+a$, $0 \le a \le l_j-1$, then
\begin{equation*}
\begin{split}
r_{N,j} &= \beta_j^{\mu+a} \sum_{h=0}^L \sigma_{L-h} \!\left( \overline{l}, \overline{\beta} \right) \left( \prod_{k=1}^a P(L+\mu-h-1+k) \right) \beta_j^{L-h} \\
&= \beta_j^{\mu+a} \sum_{i=0}^L \sigma_i \!\left( \overline{l}, \overline{\beta} \right) \left( \prod_{k=1}^a P(i+\mu-1+k) \right) \beta_j^i.
\end{split}
\end{equation*}
(Note that the product above equals $1$ when $a=0$.)
Since $\deg P(x) =1$, we may write
$$
\prod_{k=1}^a P(i+\mu-1+k) = \sum_{k=0}^a p_k i^k,
$$
where the coefficients $p_k$ do not depend on $i$. Hence
$$
r_{N,j} = \beta_j^{\mu+a} \sum_{i=0}^L \sigma_i \!\left( \overline{l}, \overline{\beta} \right) \left( \sum_{k=0}^a p_k i^k \right) \beta_j^i 
= \beta_j^{\mu+a} \sum_{k=0}^a p_k \sum_{i=0}^L \sigma_i \!\left( \overline{l}, \overline{\beta} \right) i^k \beta_j^i = 0
$$
due to \eqref{sigmaiproperty}. Thus we can choose
\begin{equation}\label{Almuj}
A_{\overline{l}, \mu, j}(t) = \sum_{N=0}^{L+\mu-1} r_{N,j} t^N
\end{equation}
and
\begin{equation}\label{Rlmuj}
R_{\overline{l}, \mu, j}(t) = \sum_{N=L+\mu+l_j}^\infty r_{N,j} t^N.
\end{equation}
\end{proof}

\subsection{Euler's factorial series}

To prove Theorem \ref{mainresult}, we need approximations to the series $F(\alpha_j t)$.
Thus we choose $P(x)=1+x$ and $\overline{\beta}= \overline{\alpha} = \left( \alpha_1, \ldots, \alpha_m \right)^T$, and set $l_j =l \in \Z_{\ge 1}$ for all $j \in \{1, \ldots, m\}$. Theorem \ref{Pade} gives
\begin{equation*}
A_{\overline{l},\mu,0}(t)=\sum_{i=0}^{ml} \frac{\sigma_i}{(i+\mu)!} t^{ml-i}, \quad \sigma_i = \sigma_i \left(\overline{l}, \overline{\alpha} \right),
\end{equation*}
and, directly by \eqref{Almuj} and \eqref{remaindercoef},
$$
A_{\overline{l}, \mu, j} (t) = \sum_{N=0}^{ml+\mu-1} t^N \sum_{h=0}^{\min \{ml,N\}} \sigma_{ml-h} \cdot \frac{(N-h)!}{(ml-h+\mu)!} \cdot \alpha_j^{N-h}, \quad j\in \{1,\ldots,m\}.
$$
Similarly by \eqref{Rlmuj} and \eqref{remaindercoef}, for $N=(m+1)l+\mu+k$, $k \in \N$, we have
\begin{equation*}
\begin{split}
r_{N,j} &=\sum_{h=0}^{ml} \sigma_{ml-h} \cdot \frac{((m+1)l+\mu+k-h)!}{(ml-h+\mu)!} \cdot \alpha_j^{(m+1)l+\mu+k-h}\\
&=\alpha_j^{l+\mu+k}\sum_{i=0}^{ml} \sigma_{i} \cdot \frac{(i+\mu+l+k)!}{(i+\mu)!} \cdot \alpha_j^{i}\\
&=l!k! \binom{l+k}{k} \alpha_j^{l+\mu+k}\sum_{i=0}^{ml} \sigma_{i} \binom{i+\mu+l+k}{i+\mu} \alpha_j^{i},
\end{split}
\end{equation*}
so that
$$
R_{\overline{l}, \mu, j}(t) = l! t^{(m+1)l+\mu} \sum_{k=0}^\infty t^k k! \binom{l+k}{k} \alpha_j^{l+\mu+k}\sum_{i=0}^{ml} \sigma_{i} \binom{i+\mu+l+k}{i+\mu} \alpha_j^{i}, \quad j = 1,\ldots,m.
$$

To make the polynomials belong to $\Z_{\mathbb{K}}[t]$, we multiply everything by $(ml+\mu)!$ and denote
\begin{equation*}
B_{l,\mu,0}(t) := (ml+\mu)!\,A_{\overline{l},\mu,0}(t) = \sum_{i=0}^{ml} \sigma_i \cdot \frac{(ml+\mu)!}{(i+\mu)!} \cdot t^{ml-i},
\end{equation*}
\begin{equation*}
\begin{split}
B_{l,\mu,j}(t) := (ml+\mu)!\,A_{\overline{l},\mu,j}(t) 
= (ml+\mu)! \sum_{N=0}^{ml+\mu-1} t^N \sum_{h=0}^{\min \{ml,N\}} \sigma_{ml-h} \cdot \frac{(N-h)!}{(ml-h+\mu)!} \cdot \alpha_j^{N-h},
\end{split}
\end{equation*}
\begin{equation}\label{jaannos}
\begin{split}
S_{l,\mu,j}(t) &:= (ml+\mu)!\,R_{\overline{l},\mu,j}(t) \\
&= (ml+\mu)! l! t^{(m+1)l+\mu}\sum_{k=0}^{\infty}k! \binom{l+k}{k} \alpha_j^{l+k+\mu} t^k \sum_{i=0}^{ml} \sigma_{i} \binom{i+\mu+l+k}{i+\mu} \alpha_j^{i}.
\end{split}
\end{equation}
In this notation, the Pad\'e approximation formula in \eqref{Padeformula} may be rewritten as
\begin{equation}\label{PADE3}
B_{l,\mu,0}(t) F(\alpha_j t) - B_{l,\mu,j}(t)=S_{l,\mu,j}(t), \quad j = 1, \ldots, m.
\end{equation}

\section{Linear form and product formula}\label{sec:linform}

Let $\lambda_0, \lambda_1, \ldots, \lambda_m \in \Z_{\mathbb{K}}$ be such that at least one of them is non-zero, and denote
$$
\Lambda_v := \lambda_0 + \lambda_1 F_v(\alpha_1) + \ldots + \lambda_m F_v(\alpha_m)
$$
when $v \in V_0$. Equation \eqref{PADE3} gives
$$
s_{l,\mu,i} = b_{l,\mu,0} F_v(\alpha_i) - b_{l,\mu,i}
$$
where
$$
b_{l,\mu,i} = B_{l,\mu,i}(1), \quad i = 0,1,\ldots,m; \quad s_{l,\mu,i} = S_{l,\mu,i}(1), \quad i = 1,\ldots, m.
$$
Assume that $\Lambda_v = 0$ for all $v \in V$, where the collection $V$ satisfies condition \eqref{limsupehto}. Then also
\begin{equation*}
0 = b_{l,\mu,0} \Lambda_v 
= W + \lambda_1 s_{l,\mu,1} + \ldots + \lambda_m s_{l,\mu,m},
\end{equation*}
where
\begin{equation*}
W = W (l, \mu) := \lambda_0 b_{l,\mu,0} + \lambda_1 b_{l,\mu,1} + \ldots + \lambda_m b_{l,\mu,m} \in \Z_{\mathbb{K}}.
\end{equation*}

If $W \neq 0$, then
\begin{equation}\label{tuloyht}
\begin{split}
1 &= \prod_{v} \|W\|_v  \le \left( \prod_{v \in V_\infty} \|W\|_v \right) \prod_{v \in V} \|W\|_v\\
&\le \left( \prod_{v \in V_\infty} \| \lambda_0 b_{l,\mu,0} + \lambda_1 b_{l,\mu,1} + \ldots + \lambda_m b_{l,\mu,m} \|_v \right) \prod_{v \in V} \| -\lambda_1 s_{l,\mu,1} - \ldots - \lambda_m s_{l,\mu,m} \|_v \\
&\le \left( \prod_{v \in V_\infty} \left( \sum_{i=0}^m \| \lambda_i \|_v \right) \max_{0 \le i \le m} \{ \|b_{l,\mu,i}\|_v \} \right) \prod_{v \in V} \max_{1 \le i \le m} \{ \| s_{l,\mu,i} \|_v \}.
\end{split}
\end{equation}

Next we shall see that such a non-zero $W(l, \mu)$ actually exists.

\section{Determinant}\label{Determinant}

\begin{lemma}\label{detlemma}
When the numbers $\alpha_j$, $j \in \{1,\ldots,m\}$, are pairwise different and non-zero, we have
$$
\Delta(t) :=
\begin{vmatrix}
B_{l,0,0}(t) & B_{l,0,1}(t) & \cdots & B_{l,0,m}(t) \\
B_{l,1,0}(t) & B_{l,1,1}(t) & \cdots & B_{l,1,m}(t) \\
\vdots & \vdots & \ddots & \vdots \\
B_{l,m,0}(t) & B_{l,m,1}(t) & \cdots & B_{l,m,m}(t)
\end{vmatrix}
\neq 0.
$$
\end{lemma}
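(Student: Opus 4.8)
The plan is to isolate the coefficient of the lowest power of $t$ in $\Delta(t)$ and show it is non-zero. Working in the formal power series ring $\K[[t]]$, I would first perform, for each $j\in\{1,\dots,m\}$, the column operation that subtracts $F(\alpha_j t)$ times the $0$th column from the $j$th column. This leaves the determinant unchanged, and by the Pad\'e identity \eqref{PADE3} the new $(\mu,j)$-entry becomes $-S_{l,\mu,j}(t)$, while the $0$th column keeps its entries $B_{l,\mu,0}(t)$. Expanding the resulting determinant along the $0$th column yields
\begin{equation*}
\Delta(t)=\sum_{\mu=0}^{m}(-1)^{\mu}B_{l,\mu,0}(t)\,D_\mu(t),
\end{equation*}
where $D_\mu(t)$ is the $m\times m$ minor formed from the entries $-S_{l,\nu,j}(t)$ with $\nu\in\{0,\dots,m\}\setminus\{\mu\}$ and $1\le j\le m$.

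Next I would track orders at $t=0$. Each $B_{l,\mu,0}(t)$ has non-zero constant term $\sigma_{ml}=(-1)^{ml}$, so $\ord_{t=0}B_{l,\mu,0}=0$. By the explicit formula \eqref{jaannos}, every entry $S_{l,\nu,j}(t)$ carries the common factor $t^{(m+1)l+\nu}$; writing $S_{l,\nu,j}(t)=t^{(m+1)l+\nu}\widetilde S_{l,\nu,j}(t)$ and pulling $t^{(m+1)l+\nu}$ out of the $\nu$th row of $D_\mu$ gives $\ord_{t=0}D_\mu\ge\sum_{\nu\ne\mu}\bigl((m+1)l+\nu\bigr)$, with equality precisely when $\det\bigl(-\widetilde S_{l,\nu,j}(0)\bigr)_{\nu\ne\mu,\,j}\ne 0$. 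The right-hand side equals $m(m+1)l+\tfrac{m(m+1)}{2}-\mu$, so among the summands it attains its minimum $D:=m(m+1)l+\binom m2$ only at $\mu=m$, and every summand with $\mu<m$ has order $\ge D+1$. Consequently the coefficient of $t^{D}$ in $\Delta(t)$ equals $(-1)^{m}B_{l,m,0}(0)\cdot\det\bigl(-\widetilde S_{l,\nu,j}(0)\bigr)_{0\le\nu\le m-1,\ 1\le j\le m}$, and since $B_{l,m,0}(0)=(-1)^{ml}\ne 0$ it remains only to show this $m\times m$ determinant is non-zero.

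For that I would read off $\widetilde S_{l,\nu,j}(0)$ from the $k=0$ term of \eqref{jaannos}, namely $\widetilde S_{l,\nu,j}(0)=(ml+\nu)!\,l!\,\alpha_j^{\,l+\nu}\sum_{i=0}^{ml}\sigma_i\binom{i+\nu+l}{l}\alpha_j^{\,i}$. Since $\binom{i+\nu+l}{l}$ is a polynomial in $i$ of degree $l$ with leading coefficient $1/l!$, while $\sum_i\sigma_i i^{k}\alpha_j^{\,i}=0$ for $0\le k\le l-1$ by \eqref{sigmaiproperty}, only the top-degree part survives: $\sum_i\sigma_i\binom{i+\nu+l}{l}\alpha_j^{\,i}=\tfrac1{l!}\sum_i\sigma_i i^{\,l}\alpha_j^{\,i}$. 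Applying the operator identity \eqref{deriv} exactly as in the proof of \eqref{sigmaiproperty}, $\sum_i\sigma_i i^{\,l}\alpha_j^{\,i}=\bigl(w\tfrac{\mathrm d}{\mathrm dw}\bigr)^{l}\prod_{k=1}^{m}(\alpha_k-w)^{l}\big|_{w=\alpha_j}=(-1)^{l}l!\,\alpha_j^{\,l}\prod_{k\ne j}(\alpha_k-\alpha_j)^{l}$, which is non-zero precisely because the $\alpha_j$ are pairwise distinct and non-zero. Hence $\widetilde S_{l,\nu,j}(0)=(ml+\nu)!\,\alpha_j^{\,\nu}\,\eta_j$ with $\eta_j:=(-1)^{l}l!\,\alpha_j^{\,2l}\prod_{k\ne j}(\alpha_k-\alpha_j)^{l}\ne 0$ depending only on $j$ (and $l$). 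Pulling the non-zero factors $-(ml+\nu)!$ out of the rows and $\eta_j$ out of the columns, the matrix $\bigl(-\widetilde S_{l,\nu,j}(0)\bigr)_{0\le\nu\le m-1,\ 1\le j\le m}$ reduces to the Vandermonde matrix $(\alpha_j^{\,\nu})$, whose determinant $\prod_{1\le i<j\le m}(\alpha_j-\alpha_i)$ is non-zero. Therefore the $t^{D}$-coefficient of $\Delta(t)$ is non-zero and $\Delta(t)\not\equiv 0$.

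The main obstacle I anticipate is the bookkeeping in the second paragraph, i.e.\ verifying that $\mu=m$ is the unique summand reaching order $t^{D}$, together with the reduction of $\sum_i\sigma_i\binom{i+\nu+l}{l}\alpha_j^{\,i}$ to a multiple of $\sum_i\sigma_i i^{\,l}\alpha_j^{\,i}$ and its evaluation via \eqref{deriv}. It is crucial that the exponent set $\{0,1,\dots,m-1\}$ coming from the minor $D_m$ is an interval of integers, so that the surviving determinant is an honest Vandermonde rather than a generalized one (the latter could vanish even for distinct non-zero $\alpha_j$).
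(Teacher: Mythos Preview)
Your proof is correct and follows essentially the same route as the paper: the same column operations turning $B_{l,\mu,j}$ into $-S_{l,\mu,j}$, the same identification of the lowest-order contribution via the $\mu=m$ cofactor, the same reduction of $\sum_i\sigma_i\binom{i+\nu+l}{l}\alpha_j^{\,i}$ to $\tfrac{1}{l!}\sum_i\sigma_i i^{l}\alpha_j^{\,i}$ using \eqref{sigmaiproperty}, and the same evaluation via \eqref{deriv} leading to a Vandermonde determinant. The only cosmetic difference is that the paper also bounds $\deg\Delta(t)$ from above by the same quantity $D=m(m+1)l+\binom{m}{2}$, thereby showing $\Delta(t)=bt^{D}$ is a monomial before computing $b$; your argument bypasses this by directly isolating the $t^{D}$-coefficient, which is perfectly sufficient for the non-vanishing claim.
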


\begin{proof}
By \eqref{degreesorders}, the degrees of the entries are at most
$$
\begin{pmatrix}
ml & ml-1 & \cdots & ml-1 \\
ml & ml & \cdots & ml \\
\vdots & \vdots & \ddots & \vdots \\
ml & ml+m-1 & \cdots & ml+m-1
\end{pmatrix}.
$$
Hence
$$
\deg \Delta (t) \le (m+1)ml+\frac{(m-1)m}{2}.
$$
Column operations together with \eqref{PADE3} yield the representation
\begin{equation}\label{rep2}
\Delta(t) =
\begin{vmatrix}
B_{l,0,0}(t) & -S_{l,0,1}(t) & \cdots & -S_{l,0,m}(t) \\
B_{l,1,0}(t) & -S_{l,1,1}(t) & \cdots & -S_{l,1,m}(t) \\
\vdots & \vdots & \ddots & \vdots \\
B_{l,m,0}(t) & -S_{l,m,1}(t) & \cdots & -S_{l,m,m}(t)
\end{vmatrix}.
\end{equation}
According to \eqref{degreesorders}, the orders of the entries in \eqref{rep2} are at least
$$
\begin{pmatrix}
0 & (m+1)l & \cdots & (m+1)l \\
0 & (m+1)l+1 & \cdots & (m+1)l+1 \\
\vdots & \vdots & \ddots & \vdots \\
0 & (m+1)l+m & \cdots & (m+1)l+m
\end{pmatrix}.
$$
By expanding \eqref{rep2} by the first column we see that
$$
\ord \Delta (t) \ge \sum_{i=0}^{m-1} ((m+1)l+i) = m(m+1)l+\frac{(m-1)m}{2}.
$$
Thus
$$
\Delta (t) = bt^{m(m+1)l+\frac{(m-1)m}{2}},
$$
where the coefficient $b$ is an $m \times m$ determinant formed from the lowest term coefficients of the remainders $-S_{l,\mu,j}$, $\mu = 0,1,\ldots,m-1$, $j = 1,\ldots,m$ (corresponding to $k=0$ in \eqref{jaannos}), multiplied by the lowest term coefficient of the polynomial $B_{l,m,0} (t)$ which is $\sigma_{ml} = (-1)^{ml}$:
\begin{multline*}
b= (-1)^{ml} \cdot (-1)^m (l!)^m \left( \prod_{\mu =0}^{m-1} (ml+\mu)! \right) \left( \prod_{j=1}^m \alpha_j^l \right) \cdot \\
\begin{vmatrix}
\sum_{i=0}^{ml} \sigma_{i} \binom{i+l}{i} \alpha_1^{i} & \sum_{i=0}^{ml} \sigma_{i} \binom{i+l}{i} \alpha_2^{i} & \cdots & \sum_{i=0}^{ml} \sigma_{i} \binom{i+l}{i} \alpha_m^{i} \\
\alpha_1 \sum_{i=0}^{ml} \sigma_{i} \binom{i+1+l}{i+1} \alpha_1^{i} & \alpha_2 \sum_{i=0}^{ml} \sigma_{i} \binom{i+1+l}{i+1} \alpha_2^{i} & \cdots & \alpha_m \sum_{i=0}^{ml} \sigma_{i} \binom{i+1+l}{i+1} \alpha_m^{i} \\
\vdots & \vdots & \ddots & \vdots \\
\alpha_1^{m-1} \sum_{i=0}^{ml} \sigma_{i} \binom{i+m-1+l}{i+m-1} \alpha_1^{i} & \alpha_2^{m-1} \sum_{i=0}^{ml} \sigma_{i} \binom{i+m-1+l}{i+m-1} \alpha_2^{i} & \cdots & \alpha_m^{m-1} \sum_{i=0}^{ml} \sigma_{i} \binom{i+m-1+l}{i} \alpha_m^{i} \\
\end{vmatrix}.
\end{multline*}
It remains to show that $b \neq 0$.

Since
$$
\binom{i+\mu+l}{i+\mu} = \frac{(i+\mu+l)!}{(i+\mu)! l!} = \frac{1}{l!} (i+\mu+1)\cdots(i+\mu+l) = \frac{1}{l!} \left( i^l + \sum_{k=0}^{l-1} p_k i^k \right)
$$
for any $\mu \in \{0,1,\ldots,m-1\}$, where the coefficients $p_k$ do not depend on $i$, we get
$$
\sum_{i=0}^{ml} \sigma_{i} \binom{i+\mu+l}{i+\mu} \alpha_j^{i} = \frac{1}{l!} \left( \sum_{i=0}^{ml} \sigma_i i^l \alpha_j^i + \sum_{k=0}^{l-1} p_k \sum_{i=0}^{ml} \sigma_i i^k \alpha_j^i \right) = \frac{1}{l!} \sum_{i=0}^{ml} \sigma_i i^l \alpha_j^i
$$
for all $j = 1, \ldots, m$, $\mu = 0,1,\ldots,m-1$ by the property \eqref{sigmaiproperty}. Hence
\begin{equation*}
\begin{split}
b= \;&(-1)^{m(l+1)} \left( \prod_{\mu =0}^{m-1} (ml+\mu)! \right) \left( \prod_{j=1}^m \alpha_j^l \right) \cdot \\
&\begin{vmatrix}
\sum_{i=0}^{ml} \sigma_i i^l \alpha_1^i & \sum_{i=0}^{ml} \sigma_i i^l \alpha_2^i & \cdots & \sum_{i=0}^{ml} \sigma_i i^l \alpha_m^i \\
\alpha_1 \sum_{i=0}^{ml} \sigma_i i^l \alpha_1^i & \alpha_2 \sum_{i=0}^{ml} \sigma_i i^l \alpha_2^i & \cdots & \alpha_m \sum_{i=0}^{ml} \sigma_i i^l \alpha_m^i \\
\vdots & \vdots & \ddots & \vdots \\
\alpha_1^{m-1} \sum_{i=0}^{ml} \sigma_i i^l \alpha_1^i & \alpha_2^{m-1} \sum_{i=0}^{ml} \sigma_i i^l \alpha_2^i & \cdots & \alpha_m^{m-1} \sum_{i=0}^{ml} \sigma_i i^l \alpha_m^i \\
\end{vmatrix} \\
= \;&(-1)^{m(l+1)} \left( \prod_{\mu =0}^{m-1} (ml+\mu)! \right) \left( \prod_{j=1}^m \alpha_j^l \right) \left( \prod_{j=1}^m \left( \sum_{i=0}^{ml} \sigma_i i^l \alpha_j^i \right) \right) \prod_{1 \le i < j \le m} (\alpha_j - \alpha_i)
\end{split}
\end{equation*}
by the Vandermonde determinant formula. Here, using \eqref{sigmadef} and \eqref{deriv},
$$
\sum_{i=0}^{ml} \sigma_i i^l \alpha_j^i = \left( w \frac{\mathrm{d}}{\mathrm{d}w} \right)^l \prod_{i=1}^m (\alpha_i -w)^{l} \bigg|_{w=\alpha_j} = (-1)^l l! \alpha_j^l \prod_{\substack{i=1 \\ i \neq j}}^m (\alpha_i - \alpha_j)^{l}\neq 0
$$
for all $j = 1,\ldots,m$.
\end{proof}

\begin{lemma}\label{W-lemma}
For any given $l \in \Z_{\ge 1}$ there exists a $\mu \in \{ 0,1,\ldots,m \}$ such that $W(l,\mu) \neq 0$.
\end{lemma}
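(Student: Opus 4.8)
The plan is to read off the values $W(l,0), W(l,1), \ldots, W(l,m)$ as the coordinates of a single matrix--vector product and then apply Lemma \ref{detlemma}. Write $b_{l,\mu,i} = B_{l,\mu,i}(1)$ as in Section \ref{sec:linform}, and form the $(m+1)\times(m+1)$ matrix
$$
M_l = \bigl( b_{l,\mu,i} \bigr)_{0 \le \mu,\, i \le m},
$$
with row index $\mu$ and column index $i$ (the same ordering as in the determinant $\Delta(t)$ of Lemma \ref{detlemma}). By the very definition $W(l,\mu) = \lambda_0 b_{l,\mu,0} + \lambda_1 b_{l,\mu,1} + \ldots + \lambda_m b_{l,\mu,m}$, we have
$$
\begin{pmatrix} W(l,0) \\ W(l,1) \\ \vdots \\ W(l,m) \end{pmatrix}
= M_l \begin{pmatrix} \lambda_0 \\ \lambda_1 \\ \vdots \\ \lambda_m \end{pmatrix}.
$$
Hence, provided $M_l$ is invertible, the vector $(\lambda_0,\ldots,\lambda_m)^T \neq 0$ (which holds by hypothesis) forces the left-hand side to be a non-zero vector, so $W(l,\mu) \neq 0$ for at least one $\mu \in \{0,1,\ldots,m\}$, as desired.

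So the only thing to verify is $\det M_l \neq 0$. But $\det M_l$ is exactly $\Delta(t)$ evaluated at $t=1$: indeed $\Delta(t)$ is the determinant of the matrix $\bigl(B_{l,\mu,i}(t)\bigr)_{0 \le \mu, i \le m}$, and evaluation at $t=1$ commutes with taking the determinant, giving $\Delta(1) = \det\bigl(B_{l,\mu,i}(1)\bigr) = \det M_l$. Lemma \ref{detlemma} and its proof show that $\Delta(t) = b\, t^{m(m+1)l + (m-1)m/2}$ with $b \neq 0$ (the coefficient $b$ being, up to sign and explicit factorial/Vandermonde factors, a product of the non-zero quantities $\sum_{i=0}^{ml}\sigma_i i^l \alpha_j^i$). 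Therefore $\det M_l = \Delta(1) = b \neq 0$, and the proof is complete.

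There is essentially no obstacle here beyond Lemma \ref{detlemma} itself, which has already been established; the argument is a one-line application of linear algebra once one observes that $W(l,\cdot)$ is the image of the coefficient vector under $M_l$. The only point demanding a little care is bookkeeping: one must use the same indexing of rows and columns in $M_l$ as in $\Delta(t)$ so that $\det M_l$ and $\Delta(1)$ genuinely coincide (a permutation of rows or columns would at worst change the sign, which is harmless, but it is cleanest to keep the orderings identical).
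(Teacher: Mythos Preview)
Your proof is correct and is essentially identical to the paper's own argument: the paper also evaluates $\Delta(t)$ at $t=1$ to conclude that the matrix $(b_{l,\mu,i})$ is nonsingular, and then invokes linear algebra with the non-zero vector $(\lambda_0,\ldots,\lambda_m)^T$. You have simply spelled out the matrix--vector product explicitly where the paper says ``by linear algebra it follows.''
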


\begin{proof}
From Lemma \ref{detlemma} it follows in particular that
$$
\begin{vmatrix}
b_{l,0,0} & b_{l,0,1} & \cdots & b_{l,0,m} \\
b_{l,1,0} & b_{l,1,1} & \cdots & b_{l,1,m} \\
\vdots & \vdots & \ddots & \vdots \\
b_{l,m,0} & b_{l,m,1} & \cdots & b_{l,m,m}
\end{vmatrix}
= \Delta(1) \neq 0.
$$
We assumed that $(\lambda_0, \lambda_1, \ldots, \lambda_m)^T \neq \overline{0}$, so by linear algebra it follows that the quantity $W (l, \mu) = \lambda_0 b_{l,\mu,0} + \lambda_1 b_{l,\mu,1} + \ldots + \lambda_m b_{l,\mu,m}$ must be non-zero for some $\mu \in \{ 0,1,\ldots,m \}$.
\end{proof}

\section{Estimates for the polynomials and remainders and proof of Theorem \ref{mainresult}}\label{sec:estimates}

As the last step in proving Theorem \ref{mainresult}, we give upper bounds for the Pad\'e polynomials and remainders. Now, using the triangle inequality and property \eqref{property2} with $v|\infty$,
\begin{multline*}
\| b_{l,\mu,0} \|_v = \| B_{l,\mu,0}(1) \|_v = \left\| \sum_{i=0}^{ml} \sigma_i\frac{(ml+\mu)!}{(i+\mu)!} \right\|_v \\
\le \left\|(ml)! \binom{ml+\mu}{\mu} \right\|_v \sum_{i=0}^{ml} \|\sigma_i\|_v
\le \left\|(ml)! \binom{ml+\mu}{\mu} \right\|_v \prod_{j=1}^m (\left\| \alpha_j \right\|_v+1)^l
\end{multline*}
and
\begin{equation*}
\begin{split}
\| b_{l,\mu,j} \|_v = \;&\| B_{l,\mu,j}(1) \|_v \\
= \;&\left\| (ml+\mu)! \sum_{N=0}^{ml+\mu-1} \sum_{h=0}^{\min \{ml,N\}} \frac{(N-h)!}{(ml-h+\mu)!} \sigma_{ml-h}  \alpha_j^{N-h} \right\|_v \\
\le \;&\left\| (ml+\mu)! \right\|_v \sum_{N=0}^{ml+\mu-1} \sum_{h=0}^{\min \{ml,N\}} \left\| \frac{(N-h)!}{(ml-h+\mu)!} \right\|_v \left\| \sigma_{ml-h} \right\|_v \left\| \alpha_j \right\|_v^{N-h} \\
\le \;&\left\| (ml+\mu)! \right\|_v \sum_{N=0}^{ml+\mu-1} \sum_{h=0}^{\min \{ml,N\}} \left\| \sigma_{ml-h} \right\|_v \left( \max \left\{ 1, \left\| \alpha_j \right\|_v \right\} \right)^{N-h} \\
\le \;&\left\| (ml+\mu)! \right\|_v \sum_{N=0}^{ml+\mu-1} \sum_{h=0}^{\min \{ml,N\}} \left\| \sigma_{ml-h} \right\|_v \left( \max \left\{ 1, \left\| \alpha_j \right\|_v \right\} \right)^{ml+m-1-h} \\
\le \;&\left\| (ml+\mu)! \right\|_v \left( \max \left\{ 1, \left\| \alpha_j \right\|_v \right\} \right)^{m-1} (ml+m) \cdot \\
&\sum_{h=0}^{\min \{ml,N\}} \left\| \sigma_{ml-h} \right\|_v \left( \max \left\{ 1, \left\| \alpha_j \right\|_v \right\} \right)^{ml-h} \\
\le \;&\left\| (ml+\mu)! \right\|_v \left( \max \left\{ 1, \left\| \alpha_j \right\|_v \right\} \right)^{ml} (ml+m) \prod_{i=1}^m \left(\|\alpha_i\|_v + \max \left\{ 1, \left\| \alpha_j \right\|_v \right\} \right)^l \\
\end{split}
\end{equation*}
for all $j=1,\ldots,m$, $\mu = 0,1,\ldots,m$.

We still need non-Archimedean estimates for the remainders, so let now $v \in V_0$. Then
\begin{equation*}
\begin{split}
\| s_{l,\mu,j} \|_v &= \| S_{l,\mu,j}(1) \|_v \\
&= \left\| (ml+\mu)! l! \sum_{k=0}^{\infty}k! \binom{l+k}{k} \alpha_j^{l+k+\mu} \sum_{i=0}^{ml} \sigma_{i} \binom{i+\mu+l+k}{i+\mu} \alpha_j^{i} \right\|_v \\
&\le \| (ml+\mu)! l! \|_v \left\| \alpha_j \right\|_v^l.
\end{split}
\end{equation*}
for all $j=1,\ldots,m$, $\mu = 0,1,\ldots,m$.

So, recalling property \eqref{valuationproperty} of our normalised valuations, the expression in \eqref{tuloyht} becomes
\begin{equation}\label{lastestimate}
\begin{split}
&\left( \prod_{v \in V_\infty} \left( \sum_{i=0}^m \| \lambda_i \|_v \right) \max_{0 \le i \le m} \{ \|b_{l,\mu,i}\|_v \} \right) \prod_{v \in V} \max_{1 \le i \le m} \{ \| s_{l,\mu,i} \|_v \} \\
\le \;&\Bigg( \prod_{v \in V_\infty} \left( \sum_{i=0}^m \| \lambda_i \|_v \right) (ml+m) \left\| (ml+m)! \right\|_v \left( \max_{1 \le j \le m} \left\{ 1, \left\| \alpha_j \right\|_v \right\} \right)^{ml} \cdot \\
&\prod_{i=1}^m \left(\|\alpha_i\|_v + \max_{1 \le j \le m} \left\{ 1, \left\| \alpha_j \right\|_v \right\} \right)^l \Bigg) \cdot \prod_{v \in V} \| (ml)! l! \|_v  \left( \max_{1 \le j \le m} \left\{ \left\| \alpha_j \right\|_v \right\} \right)^l \\
\le \;&\left( \prod_{v \in V_\infty} \left( \sum_{i=0}^m \| \lambda_i \|_v \right) \right) c_2^l (ml+m)^\kappa (ml+m)! \cdot \prod_{v \in V} \| (ml)! l! \|_v,
\end{split}
\end{equation}
where
\begin{equation*}
\begin{split}
c_2= &\left( \prod_{v \in V_\infty} \left( \left( \max_{1 \le j \le m} \left\{ 1, \left\| \alpha_j \right\|_v \right\} \right)^{m}  \prod_{i=1}^m \left(\|\alpha_i\|_v + \max_{1 \le j \le m} \left\{ 1, \left\| \alpha_j \right\|_v \right\} \right) \right) \right) \cdot \\
&\prod_{v \in V} \max_{1 \le j \le m} \left\{ \left\| \alpha_j \right\|_v \right\}.
\end{split}
\end{equation*}

\begin{proof}[Proof of Theorem \ref{mainresult}]
In Section \ref{Determinant} we saw that for every $l \in \Z_{\ge 1}$, there exists a $\mu \in \{0,1,\ldots,m\}$ such that $W=W (l,\mu) \neq 0$. Hence the estimate in \eqref{tuloyht} holds for infinitely many $W(l, \mu)$, so that our assumption $\Lambda_v = 0$ for all $v \in V$ and estimates \eqref{tuloyht} and \eqref{lastestimate} lead to
$$
1 \le \left( \prod_{v \in V_\infty} \left( \sum_{i=0}^m \| \lambda_i \|_v \right) \right) c_2^l (ml+m)^\kappa (ml+m)! \prod_{v \in V} \| (ml)! l! \|_v
$$
which holds for infinitely many $l$. This is a contradiction with condition \eqref{limsupehto}, and thus there must exist a valuation $v' \in V$ such that $\Lambda_{v'} \neq 0$.
\end{proof}

\section{Lower bound: proof of Theorem \ref{mainresult2}}

\subsection{Product formula again}

The fundamental product formula \eqref{prodformula} is the starting point for the proof of our second theorem as well. We repeat Section \ref{sec:linform} with a slightly more refined assumption. First we need some notation though.

Let $m \in \Z_{\ge 1}$ and $\log H \ge se^s$, where
\begin{equation}\label{chooses}
s = \max \left\{ e^\kappa +1, c_1+1, (m+3)^2+1 \right\},
\end{equation}
$$
\kappa = [\K : \Q],
$$
$$
c_1= \prod_{v \in V_\infty} \left( \left( \max_{1 \le j \le m} \left\{ 1, \left\| \alpha_j \right\|_v \right\} \right)^m \prod_{i=1}^m \left(\|\alpha_i\|_v + \max_{1 \le j \le m} \left\{ 1, \left\| \alpha_j \right\|_v \right\} \right) \right).
$$
Suppose that $\lambda_0, \lambda_1, \ldots, \lambda_m \in \Z_{\mathbb{K}}$ are such that at least one of them is non-zero and
\begin{equation*}
\prod_{v \in V_\infty} \max_{0 \le i \le m} \left\{ \| \lambda_i \|_v \right\} \le H.
\end{equation*}
Define
\begin{equation}\label{Nldef}
\begin{split}
N(l) := \;&\log H + \Bigg( 2(m+1) + \frac{2m}{l} + \frac{\log c_1}{\log \log l} + \frac{1}{\log \log l} + \frac{\left(\kappa-\frac{1}{2}\right) \log l}{l \log \log l} \\
&+ \frac{\kappa \log m}{l \log \log l} + \frac{\kappa \log (m+1)}{l \log \log l} + \frac{\kappa}{l^2 \log \log l} \Bigg) l \log \log l - l \log l
\end{split}
\end{equation}
and let
\begin{equation}\label{elldef}
\ell := \max \left\{ l \in \Z_{\ge 2} \; | \; N(l) \ge 0 \right\}.
\end{equation}
Denote, as before,
$$
\Lambda_v = \lambda_0 + \lambda_1 F_v(\alpha_1) + \ldots + \lambda_m F_v(\alpha_m).
$$
We saw in Section \ref{sec:linform} that
\begin{equation*}
b_{l,\mu,0} \Lambda_v 
= W + \lambda_1 s_{l,\mu,1} + \ldots + \lambda_m s_{l,\mu,m},
\end{equation*}
where
\begin{equation*}
W = W(l,\mu) = \lambda_0 b_{l,\mu,0} + \lambda_1 b_{l,\mu,1} + \ldots + \lambda_m b_{l,\mu,m} \in \Z_{\mathbb{K}}.
\end{equation*}

By Lemma \ref{W-lemma} we know that $W(\ell + 1,\mu) \neq 0$ for some $\mu \in \{0,1,\ldots,m\}$. Assume that
$$
\left\| b_{\ell + 1,\mu,0} \Lambda_v \right\|_v < \left\| \lambda_1 s_{\ell + 1,\mu,1} + \ldots + \lambda_m s_{\ell + 1,\mu,m} \right\|_v
$$
for all $v | p$, $p \in [\log (\ell + 1),m(\ell + 2)] \cap \mathbb{P}$. (The intersection certainly is non-empty due to  Bertrand's postulate. As for the choice of this interval, see Remark \ref{intervalremark}.) Then
\begin{align*}
\|W(\ell + 1,\mu)\|_v &= \left\| b_{\ell + 1,\mu,0} \Lambda_v - \left(\lambda_1 s_{\ell + 1,\mu,1} + \ldots + \lambda_m s_{\ell + 1,\mu,m} \right) \right\|_v \\
&= \left\| \lambda_1 s_{\ell + 1,\mu,1} + \ldots + \lambda_m s_{\ell + 1,\mu,m} \right\|_v
\end{align*}
for all $v | p$, $p \in [\log (\ell + 1),m(\ell + 2)] \cap \mathbb{P}$.
Hence, using the estimates made in Section \ref{sec:estimates} together with property \eqref{valuationproperty},
\begin{equation}\label{tuloyht2}
\begin{split}
1 = \;&\prod_{v} \|W(\ell + 1,\mu)\|_v  \\
\le \;&\left( \prod_{v \in V_\infty} \|W\|_v \right) \prod_{p \in [\log (\ell + 1),m(\ell + 2)]} \prod_{v|p} \|W\|_v\\
= \;&\left( \prod_{v \in V_\infty} \left\| \sum_{i=0}^m \lambda_i b_{\ell + 1,\mu,i} \right\|_v \right) \prod_{p \in [\log (\ell + 1),m(\ell + 2)]} \prod_{v|p} \left\| \lambda_1 s_{\ell + 1,\mu,1} + \ldots + \lambda_m s_{\ell + 1,\mu,m} \right\|_v\\
\le \;&\left( \prod_{v \in V_\infty} (m+1) \max_{0 \le i \le m} \left\{\| \lambda_i \|_v \right\} \max_{0 \le i \le m} \{ \|b_{\ell + 1,\mu,i}\|_v \} \right) \cdot \\
&\prod_{p \in [\log (\ell + 1),m(\ell + 2)]} \prod_{v|p} \max_{1 \le i \le m} \{ \| s_{\ell + 1,\mu,i} \|_v \} \\
\le \;&(m+1)^\kappa H \Bigg( \prod_{v \in V_\infty} \Bigg( (m(\ell+1)+m) \left\| (m(\ell+1)+\mu)! \right\|_v \cdot \\
&\left( \max_{1 \le j \le m} \left\{ 1, \left\| \alpha_j \right\|_v \right\} \right)^{m(\ell+1)} \prod_{i=1}^m \left(\|\alpha_i\|_v + \max_{1 \le j \le m} \left\{ 1, \left\| \alpha_j \right\|_v \right\} \right)^{\ell+1} \Bigg) \Bigg) \cdot \\
&\prod_{p \in [\log (\ell + 1),m(\ell + 2)]} \prod_{v|p} \| (m(\ell + 1)+\mu)! (\ell + 1)! \|_v \\
\le &(m+1)^{\kappa} (m(\ell+1)+m)^\kappa H  c_1^{\ell + 1}(m(\ell + 1)+\mu)! \cdot \\
&\prod_{p \in [\log (\ell + 1),m(\ell + 2)]} | (m(\ell + 1)+\mu)! (\ell + 1)! |_p \\
= \;&\frac{(m+1)^{\kappa} (m(\ell+2))^\kappa H  c_1^{\ell + 1} \prod_{p \in [\log (\ell + 1),m(\ell + 2)]} | (\ell + 1)! |_p}{\prod_{p < \log (\ell + 1)} |(m(\ell + 1)+\mu)! |_p} =: \Omega,
\end{split}
\end{equation}
where we utilised the fact that $\# V_\infty \le \kappa = [\K : \Q]$. The last equality is due to the product formula and the fact $m(\ell+1)+\mu \le m(\ell+2)$.

\subsection{Deriving contradiction}

We are working to establish a contradiction with \eqref{tuloyht2}, so let us study the expression $\log \Omega$
more closely. First of all, we have
\begin{align*}
\Omega &=\frac{(m+1)^{\kappa} (m(\ell+2))^\kappa H  c_1^{\ell + 1} \prod_{p \in [\log (\ell + 1),m(\ell + 2)]} | (\ell + 1)! |_p}{\prod_{p < \log (\ell + 1)} |(m(\ell + 1)+\mu)! |_p}  \\
&= \frac{(m+1)^{\kappa} (m(\ell+2))^\kappa H  c_1^{\ell + 1}}{(\ell+1)! \prod_{p < \log (\ell + 1)} |(m(\ell + 1)+\mu)! (\ell+1)!|_p} \\
&\le \frac{(m+1)^{\kappa} (m(\ell+2))^\kappa H  c_1^{\ell + 1} \prod_{p < \log (\ell + 1)} p^{\frac{m(\ell + 1)+\mu + (\ell+1)}{p-1}}}{(\ell+1)!} \\
\end{align*}
because of the product formula and property \eqref{kertoma-arvio}. Recall also the Stirling formula
$$
\log n! = \left( n+ \frac{1}{2} \right) \log n - n + \log \sqrt{2 \pi} + \frac{\theta (n)}{12},  \quad 0 < \theta (n) < 1.
$$
With these equations and estimate $\mu \le m$ we get
\begin{equation}\label{logestimate}
\begin{split}
\log \Omega \le \;&\log \left( \frac{(m+1)^{\kappa} (m(\ell+2))^\kappa H  c_1^{\ell + 1} \prod_{p < \log (\ell + 1)} p^{\frac{m(\ell + 1)+\mu + (\ell+1)}{p-1}}}{(\ell+1)!} \right) \\
\le \;&\kappa \log (m+1) + \kappa \log (m(\ell+2)) + \log H + (\ell + 1) \log c_1 + \\
&\sum_{p < \log (\ell + 1)} \log p^{\frac{m(\ell + 2)+(\ell + 1)}{p-1}} - \left((\ell + 1)+\frac{1}{2}\right) \log (\ell + 1) + (\ell + 1)\\
\le \;&\kappa \log (m+1) + \kappa \log m + \kappa \log (\ell+2) + \log H + (\ell + 1) \log c_1 \\
&+ (m(\ell + 2)+(\ell + 1)) \sum_{p < \log (\ell + 1)} \frac{\log p}{p-1} - (\ell + 1) \log (\ell + 1) \\
&- \frac{1}{2} \log (\ell + 1) + (\ell + 1) \\
\le \;&\log H + \kappa \log m + \kappa \log (m+1) + \frac{\kappa}{\ell + 1} + \left(\kappa -\frac{1}{2}\right) \log (\ell + 1) \\
&+ \left( \log c_1 + \left(m+1+\frac{m}{\ell+1}\right) \sum_{p < \log (\ell + 1)} \frac{\log p}{p-1} + 1 \right) (\ell + 1) \\
&- (\ell + 1) \log (\ell + 1),
\end{split}
\end{equation}
where
$$
\log (\ell +2) < \log (\ell +1) + \frac{1}{\ell +1}
$$
by the mean value theorem.

To be able to continue, we need to know how the sum
$$
\sum_{p < x} \frac{\log p}{p-1}
$$
behaves. Help is found from \cite{Rosser} (see the corollary of Theorem 6):
\begin{lemma}\label{Rosser} \cite{Rosser}
$$
\sum_{p \le x} \frac{\log p}{p} < \log x, \quad x > 1.
$$
\end{lemma}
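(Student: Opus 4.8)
The plan is to quote this inequality directly from the literature: it is precisely the corollary to Theorem~6 in Rosser and Schoenfeld's paper \cite{Rosser}, as the preamble already anticipates, so no independent argument is strictly needed. For orientation, though, I would record the classical route behind a bound of this shape. One starts from de Polignac's formula for the $p$-adic valuation of a factorial,
\[
\log(n!) = \sum_{p \le n} \left( \sum_{k \ge 1} \left\lfloor \frac{n}{p^k} \right\rfloor \right) \log p \;\ge\; \sum_{p \le n} \left\lfloor \frac{n}{p} \right\rfloor \log p \;\ge\; n \sum_{p \le n} \frac{\log p}{p} - \theta(n),
\]
where $\theta(n) = \sum_{p \le n} \log p$ is Chebyshev's function and we used $\lfloor n/p\rfloor \ge n/p - 1$. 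Combining this with the elementary bound $\log(n!) < n \log n$ (from $n! < n^n$, valid for $n \ge 2$) yields
\[
\sum_{p \le n} \frac{\log p}{p} < \log n + \frac{\theta(n)}{n}.
\]
Since $\sum_{p \le x}\frac{\log p}{p}$ is constant on $[n,n+1)$, it suffices to treat integer arguments $x = n$, so the problem is reduced to controlling the correction term $\theta(n)/n$.

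The main obstacle is exactly here: by Chebyshev's estimates $\theta(n)/n$ is of size roughly $1$ (and genuinely bounded below by a positive constant), so the naive inequality above only delivers $\sum_{p \le x}\frac{\log p}{p} < \log x + c$ with some $c > 1$, whereas the lemma claims the inequality with no additive slack whatsoever. Obtaining the sharp, slack-free form requires the finer asymptotic information of Mertens' first theorem, namely
\[
\sum_{p \le x} \frac{\log p}{p} = \log x + C + o(1), \qquad C = -\gamma - \sum_p \frac{\log p}{p(p-1)} < 0,
\]
which secures the desired inequality for all sufficiently large $x$; the remaining finitely many small values of $x$ are then dispatched by explicit numerical verification. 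This two-part strategy — an effective version of Mertens' estimate plus a finite check — is precisely what is carried out in \cite{Rosser}, and it is the source of the clean constant $1$ appearing on the right-hand side. Accordingly, in the paper I would simply invoke \cite{Rosser} (the corollary to its Theorem~6) and move on.
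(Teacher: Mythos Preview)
Your proposal is correct and matches the paper's own treatment: the paper also simply cites the corollary of Theorem~6 in \cite{Rosser} without giving an independent proof. Your additional sketch of the Mertens-type argument is accurate but goes beyond what the paper itself provides.
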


Since $p-1 \ge \frac{p}{2}$ for all primes $p$, it follows that
\begin{equation}\label{psumma}
\sum_{p < x} \frac{\log p}{p-1} \le 2\sum_{p < x} \frac{\log p}{p} < 2 \log x.
\end{equation}

Combining estimates \eqref{tuloyht2}, \eqref{logestimate}, and \eqref{psumma}, we have
\begin{equation*}
\begin{split}
0 \le \;&\log \Omega \\
\le \;&\log H + \kappa \log m + \kappa \log (m+1) + \frac{\kappa}{\ell + 1} + \left(\kappa -\frac{1}{2}\right) \log (\ell + 1) \\
&+ \left( \log c_1 + \left(m+1+\frac{m}{\ell+1}\right) \sum_{p < \log (\ell + 1)} \frac{\log p}{p-1} + 1 \right) (\ell + 1) - (\ell + 1) \log (\ell + 1) \\
< \;&\log H + \kappa \log m + \kappa \log (m+1) + \frac{\kappa}{\ell + 1} + \left(\kappa -\frac{1}{2}\right) \log (\ell + 1) \\
&+ \left( \log c_1 + 2\left(m+1+\frac{m}{\ell+1}\right) \log \log (\ell+1) + 1 \right) (\ell + 1) - (\ell + 1) \log (\ell + 1) \\
< \;&\log H + \Bigg( 2(m+1) + \frac{2m}{\ell+1} + \frac{\log c_1}{\log \log (\ell + 1)} + \frac{1}{\log \log (\ell + 1)} \\
&+ \frac{\left(\kappa-\frac{1}{2}\right) \log (\ell + 1)}{(\ell + 1) \log \log (\ell + 1)} + \frac{\kappa \log m}{(\ell + 1) \log \log (\ell + 1)} + \frac{\kappa \log (m+1)}{(\ell + 1) \log \log (\ell + 1)} \\
&+ \frac{\kappa}{(\ell + 1)^2 \log \log (\ell + 1)} \Bigg) (\ell + 1) \log \log (\ell + 1) - (\ell + 1) \log (\ell + 1) \\
= \;&N(\ell + 1) < 0,
\end{split}
\end{equation*}
a contradiction with \eqref{elldef}. Thus there must exist a prime
\begin{equation}\label{interval2}
p \in [\log (\ell + 1),m(\ell + 2)]
\end{equation}
and a valuation $v'|p$ such that $\left\| b_{\ell + 1,\mu,0} \Lambda_{v'} \right\|_{v'} \ge \left\| \lambda_1 s_{\ell + 1,\mu,1} + \ldots + \lambda_m s_{\ell + 1,\mu,m} \right\|_{v'}$. Then, for this valuation $v'$,
$$
\|W\|_{v'} = \left\| b_{\ell + 1,\mu,0} \Lambda_{v'} - \left(\lambda_1 s_{\ell + 1,\mu,1} + \ldots + \lambda_m s_{\ell + 1,\mu,m} \right) \right\|_{v'} \le \left\| b_{\ell + 1,\mu,0} \Lambda_{v'} \right\|_{v'} \le \left\| \Lambda_{v'} \right\|_{v'},
$$
and
\begin{equation}\label{tuloyht3}
1 = \prod_v \| W \|_v 
\le \left( \prod_{v \in V_\infty} \| W \|_v \right) \| W \|_{v'} 
\le \left( \prod_{v \in V_\infty} \| W \|_v \right) \left\| \Lambda_{v'} \right\|_{v'}.
\end{equation}

\subsection{Bounds for $\ell$}

For the final stages of the proof, we need to express the number $\ell$ in terms of the height $H$. In order to do this, we introduce the inverse function of the function $y(z)=z \log z$, $z \geq 1/e$, considered in \cite{HANCLETAL}.

\begin{lemma}\label{inverse}
\cite{HANCLETAL} The inverse function $z(y)$ of the function $y(z)= z \log z$, $z \geq 1/e$, is strictly increasing. 
Define $z_0(y)=y$ and $z_n(y)=\frac{y}{\log z_{n-1} (y)}$ for $n\in\mathbb Z_{\ge 1}$. 
Suppose $y>e$, then 
$
z_1<z_3<\cdots <z<\cdots <z_2<z_0. 
$
Thus the inverse function may be given by the infinite nested logarithm fraction
\begin{equation*}
z(y) = \lim_{n\to\infty} z_{n}(y)=\frac{y}{\log \frac{y}{\log \frac{y}{\log \cdots}}},\quad y>e. 
\end{equation*}
\end{lemma}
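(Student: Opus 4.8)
\emph{Proof plan.} The first assertion is immediate: on $[1/e,\infty)$ we have $y'(z)=1+\log z\ge 0$, vanishing only at $z=1/e$, so $y$ is strictly increasing there and its inverse $z(y)$ is well defined and strictly increasing. For the nested-fraction representation, observe that $z=z(y)$ is the unique solution of $z\log z=y$ with $z\ge 1/e$, equivalently the unique fixed point of the map $\phi(w):=y/\log w$, and that the recursion reads $z_n=\phi(z_{n-1})$ with $z_0=y$. I would record three structural facts to be used repeatedly: $\phi$ is strictly decreasing on $(1,\infty)$; hence $\psi:=\phi\circ\phi$ is strictly increasing wherever it is defined; and $t\mapsto(\log t)/t$ is strictly decreasing on $[e,\infty)$.

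Next I would fix $y>e$ and confine the iterates. Since $e\log e=e<y$ and $z\log z$ is increasing, $z>e$; and $z_1=y/\log y$ satisfies $e<z_1<y$ because $y>e\log y>e$ and $\log y>1$. If $z_n\in(e,y)$ for some $n\ge1$ then $z_{n+1}=y/\log z_n\in(y/\log y,\,y)=(z_1,y)\subset(e,y)$, so $z_n\in(e,y)$ for every $n\ge1$. Using $z=\phi(z)$ and the monotonicity of $\phi$, I would first check the seed inequalities $z_1<z<z_2<z_0$: indeed $z=y/\log z<y=z_0$, then $z_1=\phi(z_0)<\phi(z)=z$, then $z_2=\phi(z_1)>\phi(z)=z$, while $z_2<z_0$ amounts to $z_1>e$. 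From here an induction — applying $\phi$ to both sides and flipping the inequality each time — yields that $z_1,z_3,z_5,\dots$ is strictly increasing, that $z_0,z_2,z_4,\dots$ is strictly decreasing, and that $z_{2k-1}<z<z_{2k}$ for all $k\ge1$; this is exactly the chain $z_1<z_3<\cdots<z<\cdots<z_2<z_0$ claimed in the statement.

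Finally, the odd-indexed subsequence is increasing and bounded above by $z$, while the even-indexed one is decreasing and bounded below by $z$, so both converge, say to $L_o\le z\le L_e$ with $L_o,L_e\in[e,y]$. Letting $n\to\infty$ in $z_{n+1}=\phi(z_n)$ (continuity of $\phi$ on $(1,\infty)$) gives $\phi(L_o)=L_e$ and $\phi(L_e)=L_o$, hence $y=L_o\log L_e=L_e\log L_o$, i.e.\ $(\log L_o)/L_o=(\log L_e)/L_e$. Since $t\mapsto(\log t)/t$ is strictly decreasing on $[e,\infty)$, this forces $L_o=L_e=:L$, and then $\phi(L)=L$ gives $L\log L=y$, so $L=z$ by uniqueness. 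Therefore $z_n\to z$, and unwinding the recursion displays $z(y)$ as the infinite nested logarithm fraction.

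The main obstacle is the last step — excluding a genuine period-two orbit of $\phi$. A contraction estimate is unattractive here, since $|\phi'|$ near the left end of the invariant interval degrades to $1$ as $y\downarrow e$; instead the argument rests on the strict monotonicity of $(\log t)/t$ on $[e,\infty)$, which is precisely why one must take care throughout to keep all iterates and both subsequential limits in $(e,y)$ rather than merely in $(1,\infty)$. The interlacing induction in the middle step is then routine once the seed inequalities and the confinement $z_n\in(e,y)$ are in place.
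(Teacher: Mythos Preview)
The paper does not supply a proof of this lemma; it is quoted from \cite{HANCLETAL} and used as a black box. Your argument is correct and self-contained: the confinement $z_n\in(e,y)$, the seed inequalities $z_1<z<z_2<z_0$, the interlacing induction via the decreasing map $\phi(w)=y/\log w$, and the exclusion of a period-two orbit through the injectivity of $t\mapsto(\log t)/t$ on $[e,\infty)$ together establish both the chain and the convergence $z_n\to z$. One small cosmetic point: when you pass to limits you should note that $L_o\ge z_1>e$ and $L_e\ge z>e$ (which you essentially do), so that both limits lie strictly to the right of $e$ and the injectivity of $(\log t)/t$ applies cleanly.
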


Another little lemma from \cite{EMS2018} gives a useful upper estimate:

\begin{lemma}\label{z-funktio} \cite{EMS2018}
If $y \ge re^r$, where $r \ge e$, then
$$
z(y) \le \left(1+\frac{\log r}{r} \right) \frac{y}{\log y}.
$$
\end{lemma}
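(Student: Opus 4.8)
The plan is to use the defining property that $z(y)$ is the inverse of the strictly increasing map $y(z)=z\log z$ on $[1/e,\infty)$ (as recalled in Lemma~\ref{inverse}). Since this inverse is itself increasing, to prove the stated bound it suffices to show that $y(C)\ge y$ for $C:=\bigl(1+\tfrac{\log r}{r}\bigr)\tfrac{y}{\log y}$; then applying $z(\cdot)$ to both sides gives $z(y)\le z(y(C))=C$. As a preliminary I would check that $C$ lies in the domain: from $r\ge e$ we get $y\ge re^r\ge e$, hence $\tfrac{y}{\log y}\ge e$ because $t\mapsto t/\log t$ is increasing for $t\ge e$, so $C\ge e>1/e$.

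The second step is a purely algebraic reduction of $C\log C\ge y$. Writing $a=\tfrac{\log r}{r}$, so that $1+a=\tfrac{r+\log r}{r}$ and $\tfrac{a}{1+a}=\tfrac{\log r}{r+\log r}$, I would expand $\log C=\log(1+a)+\log y-\log\log y$, divide the desired inequality by $\tfrac{y}{\log y}>0$, and rearrange. This turns $C\log C\ge y$ into the equivalent one-variable statement
$$
\frac{\log r}{r+\log r}\,\log y+\log(r+\log r)-\log r\;\ge\;\log\log y .
$$

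The heart of the argument is to verify this last inequality, and it is where both hypotheses $r\ge e$ and $y\ge re^r$ are used. Put $u:=\log y$; the hypothesis $y\ge re^r$ gives exactly $u\ge r+\log r$, and, with $g(u):=\tfrac{\log r}{r+\log r}u-\log u$, the inequality reads $g(u)\ge \log r-\log(r+\log r)=g(r+\log r)$. Since $g'(u)=\tfrac{\log r}{r+\log r}-\tfrac1u$ and, for $u\ge r+\log r$, one has $\tfrac1u\le\tfrac1{r+\log r}\le\tfrac{\log r}{r+\log r}$ (using $\log r\ge1$, i.e.\ $r\ge e$), the function $g$ is nondecreasing on $[r+\log r,\infty)$, so $g(u)\ge g(r+\log r)$, as required. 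A pleasant sanity check is that equality holds throughout when $y=re^r$, matching $z(re^r)=e^r=\bigl(1+\tfrac{\log r}{r}\bigr)\tfrac{re^r}{\log(re^r)}$.

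I expect the only delicate point to be keeping the algebra in the reduction step clean; the genuine mathematical content is the short monotonicity argument for $g$, and no appeal to the nested-logarithm form of $z$ from Lemma~\ref{inverse} is needed.
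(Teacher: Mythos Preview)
Your argument is correct, but the paper's proof is considerably shorter and proceeds along a different line. Instead of bounding $C\log C$ from below, the paper works directly from the defining relation $z\log z=y$: writing $z=y/\log z$ and using $\log y=\log z+\log\log z$ gives
\[
z=\frac{y}{\log y}\cdot\frac{\log y}{\log z}=\frac{y}{\log y}\left(1+\frac{\log\log z}{\log z}\right).
\]
From $y\ge re^r$ and the monotonicity of $z(\cdot)$ one gets $z\ge z(re^r)=e^r$, hence $\log z\ge r\ge e$; since $t\mapsto(\log t)/t$ is decreasing on $[e,\infty)$, this yields $\frac{\log\log z}{\log z}\le\frac{\log r}{r}$, and the bound follows in one line. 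Your approach---reducing $z(y)\le C$ to $y(C)\ge y$ and then analysing the auxiliary function $g(u)=\frac{\log r}{r+\log r}\,u-\log u$---is a perfectly valid general technique for bounding inverse functions, and your monotonicity argument for $g$ is clean; it simply costs more algebra here than exploiting the explicit identity $\log y=\log z+\log\log z$.
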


\begin{proof}
Denote $z:= z(y)$ with $y \ge r e^r$. Then
\[
z=\frac{y}{\log z}
= \frac{y}{\log y}\frac{\log y}{\log z}
=\frac{y}{\log y}\left(1+\frac{\log\log z}{\log z}\right)
\le\frac{y}{\log y}\left(1+\frac{\log r}{r}\right),
\]
because $\log z \ge r \ge e$.
\end{proof}

Now, $N(\ell+1) < 0$ implies
\begin{equation}\label{l1lowerbound}
(\ell+1) \log (\ell+1) > \log H \ge se^s,
\end{equation}
so that (applying the $z$-function) $\ell+1 > e^s$. According to \eqref{chooses}, we have
\begin{equation}\label{ell-lowerbound}
\ell > e^s -1 \ge \max \left\{ e^{e^\kappa}, e^{c_1}, e^{(m+3)^2} \right\}.
\end{equation}
Hence, using the lower bound \eqref{ell-lowerbound} and the fact that $m \ge 1$, we may estimate from the definition of $N(l)$ in \eqref{Nldef}:
\begin{equation}\label{Nl1-arvio}
\begin{split}
0 \le \;&N(\ell) \\
< \;&\log H + \Bigg( 2(m+1) + \frac{2m}{e^{(m+3)^2}} + 1 + \frac{1}{2 \log (m+3)} + \frac{\left(\kappa-\frac{1}{2}\right) (m+3)^2}{e^{(m+3)^2} \cdot \kappa} \\
&+ \frac{\kappa \log m}{e^{(m+3)^2} \cdot \kappa} + \frac{\kappa \log (m+1)}{e^{(m+3)^2} \cdot \kappa} + \frac{\kappa}{e^{2(m+3)^2} \cdot \kappa} \Bigg) \ell \log \log \ell - \ell \log \ell \\
\le \;&\log H + \left( 2(m+1) + 1 + 0.360674 + 3 \cdot 10^{-6} \right) \ell \log \log \ell - \ell \log \ell \\
< \;&\log H + \left( 2m+3.361 \right) \ell \log \log \ell - \ell \log \ell.
\end{split}
\end{equation}
Thus
\begin{equation}\label{llogl-logH}
\ell \log \ell \left( 1 - \frac{(2m+3.361) \log \log \ell}{\log \ell} \right) \le \log H,
\end{equation}
where
$$
\frac{\log \log \ell}{\log \ell} < \frac{2 \log (m+3)}{(m+3)^2}
$$
by \eqref{ell-lowerbound}, and so
\begin{equation}\label{coefficient}
1 - \frac{(2m+3.361) \log \log \ell}{\log \ell} > 1 - \frac{(2m+3.361) \cdot 2 \log (m+3)}{(m+3)^2} > 0
\end{equation}
for all $m \ge 1$.

By inequalities \eqref{llogl-logH} and \eqref{coefficient} and the lower bound in \eqref{ell-lowerbound}, we have
$$
\frac{(m+3)^2}{(m+3)^2 - (2m+3.361) \cdot 2 \log (m+3)} \cdot \log H > \ell \log \ell > (m+3)^2 e^{(m+3)^2},
$$
so we may apply Lemma \ref{z-funktio} with $r=(m+3)^2$:
\begin{equation}\label{l1upperbound}
\begin{split}
\ell &< z \left( \frac{(m+3)^2}{(m+3)^2 - (2m+3.361) \cdot 2 \log (m+3)} \cdot \log H \right) \\
&\le \left( 1 +\frac{2 \log (m+3)}{(m+3)^2} \right) \frac{\frac{(m+3)^2}{(m+3)^2 - (2m+3.361) \cdot 2 \log (m+3)} \cdot \log H}{\log \log H}.
\end{split}
\end{equation}

\subsection{Measure}

To get the measure from \eqref{tuloyht3}, we need an upper estimate for the product $\prod_{v \in V_\infty} \| W \|_v$. Back in \eqref{tuloyht2} we estimated that
$$
\prod_{v \in V_\infty} \| W \|_v \le (m+1)^{\kappa} (m(\ell+2))^\kappa H  c_1^{\ell + 1}(m(\ell+2))!
$$
(taking into account that $\mu \le m$).
From \eqref{Nl1-arvio} it follows that
$$
\ell \log \ell < \left( 2m+3.361 \right) \ell \log \log \ell + \log H
$$
and by the mean value theorem we have
$$
\log (\ell +2) < \frac{2}{\ell} + \log \ell.
$$
With these estimates we get
\begin{equation*}
\begin{split}
&\log \left( \prod_{v \in V_\infty} \| W \|_v \right) \\
\le \;&\log \left( (m+1)^\kappa (m(\ell+2))^\kappa H c_1^{\ell+1} (m(\ell+2))! \right) \\
\le \;&\kappa \log (m+1) + \kappa \log m + \kappa \log (\ell+2) + \log H + (\ell+1) \log c_1 \\
&+ (m(\ell+2)) \log (m(\ell+2)) \\
= \;&\kappa \log (m+1) + \kappa \log m + \kappa \log (\ell+2) + \log H + \ell \log c_1 + \log c_1 \\
&+ (m \log m)\ell + m\ell \log (\ell+2) + 2m \log m + 2m \log (\ell+2) \\
\le \;&\kappa \log (m+1) + \kappa \log m + \frac{2 \kappa}{\ell} + \kappa \log \ell + \log H + \ell \log c_1 + \log c_1 + (m \log m)\ell \\
&+ m\ell \log \ell + 2m + 2m \log m + 2m \log \ell + \frac{4m}{\ell} \\
< \;&\kappa \log (m+1) + \kappa \log m + \frac{2 \kappa}{\ell} + \kappa \log \ell + \log H + \ell \log c_1 + \log c_1 + (m \log m)\ell \\
&+ m\left( \left( 2m+3.361 \right) \ell \log \log \ell + \log H \right) + 2m + 2m \log m + 2m \log \ell + \frac{4m}{\ell} \\
= \;&(m+1) \log H + \Bigg( \frac{\kappa \log (m+1)}{\ell \log \log \ell} + \frac{\kappa \log m}{\ell \log \log \ell} + \frac{2 \kappa}{\ell^2 \log \log \ell} + \frac{\kappa \log \ell}{\ell \log \log \ell} \\
&+ \frac{\log c_1}{\log \log \ell} + \frac{\log c_1}{\ell \log \log \ell} + \frac{m \log m}{\log \log \ell} + 2m^2 +3.361m + \frac{2m}{\ell \log \log \ell} \\
&+ \frac{2m \log m}{\ell \log \log \ell} + \frac{2m \log \ell}{\ell \log \log \ell} + \frac{4m}{\ell^2 \log \log \ell} \Bigg) \ell \log \log \ell \\
\end{split}
\end{equation*}
In the coefficient of $\ell \log \log \ell$, we have (using the bound \eqref{ell-lowerbound} and the fact that $m \ge 1$)
$$
\frac{m \log m}{\log \log \ell} < \frac{m \log m}{2 \log (m+3)} < \frac{m}{2}, \quad \frac{\log c_1}{\log \log \ell} < 1,
$$
and the rest of the fractions together are less than $0.0000034$. Hence
\begin{equation}\label{logW}
\log \left( \prod_{v \in V_\infty} \| W \|_v \right) \le (m+1) \log H + \left( 2m^2 +3.861m + 1.0000034 \right) \ell \log \log \ell.
\end{equation}

By \eqref{l1upperbound} and the assumption $\log H \ge se^s > (m+3)^2 e^{(m+3)^2}$, we have
\begin{align*}
\ell &< \frac{\left( 1 +\frac{2 \log (m+3)}{(m+3)^2} \right) (m+3)^2}{(m+3)^2 - (2m+3.361) \cdot 2 \log (m+3)} \cdot \frac{\log H}{2 \log (m+3) + (m+3)^2} \\
&= \frac{1}{(m+3)^2 - (2m+3.361) \cdot 2 \log (m+3)} \cdot \log H < \log H.
\end{align*}
Thus
\begin{equation}\label{loglogl1}
\log \log \ell < \log \log \log H.
\end{equation}

Let us next estimate $\left( 2m^2 +3.861m + 1.0000034 \right) \ell$, again using \eqref{l1upperbound}:
\begin{equation}\label{mainerrorterm}
\begin{split}
&\left( 2m^2 +3.861m + 1.0000034 \right) \ell \\
\le \;&\frac{\left( 2m^2 +3.861m + 1.0000034 \right) \left( 1 +\frac{2 \log (m+3)}{(m+3)^2} \right) (m+3)^2}{(m+3)^2 - (2m+3.361) \cdot 2 \log (m+3)} \cdot  \frac{\log H}{\log \log H} \\
= \;&\frac{m^2 \left( 2 +\frac{3.861}{m} + \frac{1.0000034}{m^2} \right) \left( 1 +\frac{2 \log (m+3)}{(m+3)^2} \right)}{1 - \frac{2 \cdot 2m \log (m+3)}{(m+3)^2} - \frac{2 \cdot 3.361 \log (m+3)}{(m+3)^2}} \cdot  \frac{\log H}{\log \log H} \\
< \;&114 m^2 \cdot  \frac{\log H}{\log \log H}
\end{split}
\end{equation}
since $m \ge 1$.


Combining estimates \eqref{logW}, \eqref{loglogl1}, and \eqref{mainerrorterm}, yields
$$
\log \left( \prod_{v \in V_\infty} \| W \|_v \right) < \left( (m+1) + 114 m^2 \cdot  \frac{\log \log \log H}{\log \log H} \right) \log H,
$$
so that inequality \eqref{tuloyht3} implies
$$
\left\| \Lambda_{v'} \right\|_{v'} \ge \frac{1}{\prod_{v \in V_\infty} \| W \|_v} > H^{-(m+1) - 114 m^2 \cdot  \frac{\log \log \log H}{\log \log H}}.
$$

\subsection{Infinitely many intervals}\label{InfInt}

We still need an upper estimate for $m(\ell+1)$ in terms of the height $H$ in order to write the interval \eqref{interval2} with respect to $H$. Once more we use \eqref{l1upperbound} and the assumption $\log H \ge se^s > (m+3)^2 e^{(m+3)^2}$:
\begin{equation}\label{coeff}
\begin{split}
&m(\ell +2) \\
\le \;&m \cdot \frac{\left( 1 +\frac{2 \log (m+3)}{(m+3)^2} \right) (m+3)^2}{(m+3)^2 - (2m+3.361) \cdot 2 \log (m+3)} \cdot \frac{\log H}{\log \log H} + 2m \\
= \;&m \left( \frac{1 +\frac{2 \log (m+3)}{(m+3)^2}}{1 - \frac{2 \cdot 2m \log (m+3)}{(m+3)^2} - \frac{2 \cdot 3.361 \log (m+3)}{(m+3)^2}} + \frac{2 \log \log H}{\log H} \right) \cdot \frac{\log H}{\log \log H} \\
\le \;&m \left( \frac{1 +\frac{2 \log (m+3)}{(m+3)^2}}{1 - \frac{2 \cdot 2m \log (m+3)}{(m+3)^2} - \frac{2 \cdot 3.361 \log (m+3)}{(m+3)^2}} + \frac{4 \log (m+3) + 2(m+3)^2}{(m+3)^2 e^{(m+3)^2}} \right) \cdot \frac{\log H}{\log \log H} \\
< \;&17 m \cdot \frac{\log H}{\log \log H}
\end{split}
\end{equation}
since $m \ge 1$.


By \eqref{l1lowerbound} and Lemma \ref{z-funktio} we have
$$
\log (\ell +1) > \log (z(\log H)) > \log (z_1 (\log H)) = \log \left( \frac{\log H}{\log \log H} \right).
$$
Combining this with \eqref{coeff} above leads to
\begin{equation*}
[ \log (\ell+1), m(\ell+2) ] \subseteq \left] \log \left( \frac{\log H}{\log \log H} \right), \frac{17m \log H}{\log \log H} \right[ =: I(m,H).
\end{equation*}
Letting $H$ have values in a very rapidly increasing sequence, something like $H_{i+1}=e^{e^{H_i}}$, the intervals $I(m,H_i)$ will be distinct.

This ends the proof of Theorem \ref{mainresult2}.
\begin{flushright}
\qed
\end{flushright}

\begin{remark}
The constants $114$ and $17$ can be improved by adjusting the lower bound of $\log H$, i.e. the choice of $s$ in \eqref{chooses}. For instance, taking $(m+3)^3$ instead of $(m+3)^2$ will reduce them considerably.
\end{remark}

\begin{remark}\label{intervalremark}
There is a connection between the width of the interval $I(m,H)$ and the error term in the lower bound \eqref{lowerbound}. Our choice of $\log (\ell+1)$ in the interval \eqref{interval2} results in the term $\log \log \log H$ in \eqref{lowerbound} (see \eqref{loglogl1}), improving the corresponding lower bound of Bertrand et. al. in \cite{Bertrand} for this function.
This is done at a cost, though, since our interval $I(m,H)$ is wider than theirs. Had we chosen $e^{\sqrt{\log (\ell+1)}}$ instead of $\log (\ell+1)$, we would have ended up with $\sqrt{\log \log H}$ instead of $\log \log \log H$. Then the dependence on $H$ in the error term of \eqref{lowerbound} would have been $\frac{1}{\sqrt{\log \log H}}$, just as it is in \cite{Bertrand}, and the interval $I(m,H)$ would have had $\exp \left( \sqrt{\log \left( \frac{\log H}{\log \log H} \right)} \right)$ as its lower bound, very much like in \cite{Bertrand} and \cite{Keijo}.

The best lower bound (in terms of $H$) would have been achieved by considering an interval of the form $[2, ml]$ with no dependence on $l$ in the lower bound, because the empty sum $\sum_{p < 2} \frac{\log p}{p-1}$ would not then cause an extra term in our estimates. The disappearing of $\log \log l$ from the estimates would mean that we would have $\frac{1}{\log \log H}$ instead of $\frac{\log \log \log H}{\log \log H}$ in the error term. This is in line with the exponential function (see \cite{EMS2018}). However, this result won't give us infinitely many distinct primes when $H$ grows, like Theorem \ref{mainresult2} does.
\end{remark}

\section{Corollaries and examples}\label{sec:corollaries}

\subsection{The field of rationals}

When $\K=\Q$, Theorem \ref{mainresult} reduces to:
\begin{corollary}
Let $m \in \Z_{\ge 1}$ and $\lambda_0, \lambda_1, \ldots, \lambda_m \in \Z$ where $\lambda_j \neq 0$ for at least one $j$.
Choose $m$ pairwise distinct, non-zero integers $\alpha_j \in \Z \setminus \{0\}$, $j=1,\ldots,m$. 
Suppose $P$ is a subset of the prime numbers such that
$$
\limsup_{l \to \infty}
c_2^l (ml+m) (ml+m)! \prod_{p \in P} | (ml)! l! |_p
=0,
$$
where
$$
c_2 = \left( \max_{1 \le j \le m} \left\{ 1, \left| \alpha_j \right| \right\} \right)^{m}  \left( \prod_{i=1}^m \left(|\alpha_i| + \max_{1 \le j \le m} \left\{ 1, \left| \alpha_j \right| \right\} \right) \right) \prod_{p \in P} \max_{1 \le j \le m} \left\{ \left| \alpha_j \right|_p \right\}.
$$
Then there exists a prime $p' \in P$ for which
$$
\lambda_0 + \lambda_1 F_{p'}(\alpha_1) + \ldots + \lambda_m F_{p'}(\alpha_m) \neq 0.
$$
\end{corollary}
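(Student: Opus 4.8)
The plan is to obtain this corollary as the direct specialisation of Theorem~\ref{mainresult} to the case $\K = \Q$. First I would record the dictionary between the abstract set-up of Section~\ref{sec:results} and the rational case: when $\K = \Q$ we have $\kappa = [\K:\Q] = 1$ and $\Z_{\K} = \Z$, so the ``algebraic integer points'' $\alpha_j$ are ordinary non-zero integers and the coefficients $\lambda_i$ are ordinary integers. The set $V_\infty$ consists of the single Archimedean place, for which $\|\cdot\|_v$ is the usual absolute value $|\cdot|$ (since $\kappa_v / \kappa = 1$), and the set $V_0$ of non-Archimedean places is in canonical bijection with the set $\mathbb{P}$ of primes, the place above $p$ satisfying $\|x\|_v = |x|_p$ for all $x \in \Q$ by \eqref{valuationproperty}. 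Under this bijection $F_v = F_p$.

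Next I would translate the constants. With the above identifications the product over $v \in V_\infty$ collapses to a single factor, so
$$
c_1 = \left( \max_{1 \le j \le m} \left\{ 1, |\alpha_j| \right\} \right)^m \prod_{i=1}^m \left( |\alpha_i| + \max_{1 \le j \le m} \left\{ 1, |\alpha_j| \right\} \right),
$$
and, taking $V \subseteq V_0$ to be the set of places corresponding to a given subset $P \subseteq \mathbb{P}$,
$$
c_2 = c_1 \prod_{p \in P} \max_{1 \le j \le m} \left\{ |\alpha_j|_p \right\},
$$
which is exactly the constant $c_2$ appearing in the corollary. Likewise $(ml+m)^\kappa = ml+m$ and $\prod_{v \in V} \|(ml)!\,l!\|_v = \prod_{p \in P} |(ml)!\,l!|_p$, so condition \eqref{limsupehto} becomes precisely the displayed $\limsup$ hypothesis of the corollary.

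Finally I would invoke Theorem~\ref{mainresult} with this $V$: it produces a place $v' \in V$ with $\Lambda_{v'} \neq 0$, and unwinding the bijection $v' \leftrightarrow p'$ this says exactly that there is a prime $p' \in P$ with $\lambda_0 + \lambda_1 F_{p'}(\alpha_1) + \ldots + \lambda_m F_{p'}(\alpha_m) \neq 0$. There is essentially no obstacle here beyond the bookkeeping of the place-to-prime dictionary; the one point to double-check is that the hypothesis ``$\lambda_j \neq 0$ for at least one $j$'' of the corollary matches the hypothesis of Theorem~\ref{mainresult} verbatim, which it does, so no further argument is needed.
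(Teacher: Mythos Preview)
Your proposal is correct and is exactly the intended argument: the paper presents this corollary as the immediate specialisation of Theorem~\ref{mainresult} to $\K=\Q$, and your dictionary (one Archimedean place with $\kappa_v/\kappa=1$, $V_0\leftrightarrow\mathbb{P}$, $\kappa=1$, $c_1$, $c_2$ as stated) reproduces precisely the hypotheses and conclusion.
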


\begin{example}
For instance, take $\alpha_1=1$ and $\alpha_2=-1$. Then, if $P \subseteq \mathbb{P}$ is such that
$$
\limsup_{l \to \infty}
4^l (2l+2)(2l+2)! \prod_{p \in P} | (2l)! l! |_p
=0,
$$
there exists a prime $p' \in P$ for which
$$
\lambda_0 + \lambda_1 F_{p'}(1) + \lambda_2 F_{p'}(-1) \neq 0.
$$
In particular, taking $\lambda_0=2a \in \Z$ and $\lambda_1=\lambda_2=-b \in \Z$, it follows that there exists a prime $p \in P$ such that
$$
a - b \sum_{n=0}^\infty (2n)! \neq 0,
$$
i.e. $\sum_{n=0}^\infty (2n)! \neq \frac{a}{b}$ for some $p' \in P$.
\end{example}

\subsection{Linear recurrences}

A sequence $(x_n)_{n=0}^\infty$ satisfies a \emph{$k$th order homogeneous linear recurrence with constant coefficients}, if, for all $n \in \Z_{\ge k}$,
$$
x_n = c_1 x_{n-1} + c_2 x_{n-2} + \ldots + c_k x_{n-k}
$$
for some $c_1,\ldots,c_k \in \C$ with $c_k \neq 0$.
If the characteristic polynomial $x^k-c_1x^{k-1}-\ldots -c_k \in \C[x]$ of this recurrence has $k$ distinct zeros $\alpha_1,\ldots,\alpha_k \in \C$, then the solution $(x_n)_{n=0}^\infty$ is given by the linear combination
$$
x_n = a_1 \alpha_1^n + \ldots + a_k \alpha_k^n, \quad n \in \Z_{\ge 0},
$$
where the coefficients $a_1, \ldots, a_k \in \C$ are determined by given initial conditions. (More about recurrences in \cite{Cull}.)

Suppose now that $c_1,\ldots,c_k \in \Z$. Then the roots $\alpha_1,\ldots,\alpha_k$ lie in a number field $\K$ of degree at most $k$, and so do the coefficients $a_1,\ldots,a_k$. Furthermore, if $\alpha_1,\ldots,\alpha_k \in \Z_\K$, then $F(\alpha_i)$, $i=1,\ldots,k$, converges for any non-Archimedean valuation $v$ of $\K$, and we have
$$
\sum_{i=1}^k a_i F_v(\alpha_i) = \sum_{i=1}^k a_i \sum_{n=0}^\infty n!\alpha_i^n = \sum_{n=0}^\infty n! \sum_{i=1}^k a_i \alpha_i^n = \sum_{n=0}^\infty n!x_n.
$$
Multiplying both sides by $d:=\lcm_{1\le i \le k} \{ \den a_i \}$ \footnote{The denominator $\den \alpha$ of an algebraic number $\alpha \in \mathbb{K}$ is the smallest positive rational integer $n$ such that $n \alpha$ is an algebraic integer.}we get a linear form with coefficients $b_i := d a_i \in \Z_\K$:
$$
\sum_{i=1}^k b_i F_v(\alpha_i) = d \sum_{n=0}^\infty n!x_n.
$$
If at least one of the coefficients $a_i$ is non-zero, it follows from Theorem \ref{mainresult} that for any $a, b \in \Z_\K$ there exists a non-Archimedean valuation $v'$ of $\K$ such that
$$
\sum_{n=0}^\infty n!x_n \neq \frac{a}{b}.
$$

\begin{example}[The Fibonacci numbers]
The Fibonacci numbers are given by the sequence
$$
f_n = \frac{1}{\sqrt{5}} (\alpha^n - \beta^n), \quad \alpha = \frac{1+\sqrt{5}}{2}, \quad \beta = \frac{1-\sqrt{5}}{2}, \quad n \in \Z_{\ge 0}.
$$
Let us work in $\Q (\sqrt{5})$ and study the series $\sum_{n=0}^\infty n! f_n$. The minimal polynomial of $\alpha$ and $\beta$ is $x^2-x-1$, so $\alpha$ and $\beta$ are algebraic integers and thus $\| \alpha \|_v, \| \beta \|_v \le 1$ for any non-Archimedean valuation $v$ of the field $\Q (\sqrt{5})$. Actually, as $\alpha \beta = -1$, we get $\|\alpha\|_v = \|\beta\|_v = 1$ for all $v \in V_0$. Hence both series $\frac{1}{\sqrt{5}}\sum_{n=0}^\infty n! \alpha^n$ and $-\frac{1}{\sqrt{5}}\sum_{n=0}^\infty n! \beta^n$ converge $v$-adically and their sum is
$$
\frac{1}{\sqrt{5}} \left(F_v(\alpha)-F_v(\beta)\right)= \frac{1}{\sqrt{5}} \left(\sum_{n=0}^\infty n! \alpha^n -\sum_{n=0}^\infty n! \beta^n \right)= \frac{1}{\sqrt{5}}\sum_{n=0}^\infty n! (\alpha^n-\beta^n) = \sum_{n=0}^\infty n! f_n.
$$

Because $x^2-5=\left(x-\sqrt{5}\right)\left(x+\sqrt{5}\right)$ in $\R[x]$, the Archimedean absolute value of $\Q$ has two extensions to $\Q (\sqrt{5})$. These are given by
$$
\left|a+b\sqrt{5}\right|_1 = \left|a+b\sqrt{5}\right|, \quad \left|a+b\sqrt{5}\right|_2 = \left|a-b\sqrt{5}\right|,
$$
where now $| \cdot |$ is the unique Archimedean extension of the Archimedean absolute value of $\Q$ to $\C$, the algebraic closure of the Archimedean completion of $\Q$.
Further,
$$
\left\|a+b\sqrt{5}\right\|_1 = \left|a+b\sqrt{5}\right|_1^{\frac{1}{2}} = \sqrt{\left|a+b\sqrt{5}\right|},
$$
$$
\left\|a+b\sqrt{5}\right\|_2 = \left|a+b\sqrt{5}\right|_2^{\frac{1}{2}} = \sqrt{\left|a-b\sqrt{5}\right|}.
$$

Let $a, b \in \Z$, $b \neq 0$ and choose $\alpha_1=\alpha$, $\alpha_2=\beta$. Then
\begin{align*}
c_2 \left( (\alpha, \beta), V \right) = \;&\left( \max \left\{ 1, \left\| \alpha \right\|_1, \left\| \beta \right\|_1 \right\} \right)^{2} 
\left( \max \left\{ 1, \left\| \alpha \right\|_2, \left\| \beta \right\|_2 \right\} \right)^{2} \cdot \\
&\left(\|\alpha\|_1 + \max \left\{ 1, \left\| \alpha \right\|_1, \left\| \beta \right\|_1 \right\} \right) 
\left(\|\beta\|_1 + \max \left\{ 1, \left\| \alpha \right\|_1, \left\| \beta \right\|_1 \right\} \right) \cdot \\
&\left(\|\alpha\|_2 + \max \left\{ 1, \left\| \alpha \right\|_2, \left\| \beta \right\|_2 \right\} \right) 
\left(\|\beta\|_2 + \max \left\{ 1, \left\| \alpha \right\|_2, \left\| \beta \right\|_2 \right\} \right) \cdot \\
&\prod_{v \in V} \max \left\{ \left\| \alpha \right\|_v, \left\| \beta \right\|_v \right\} \\
= \;& 4 \left( \frac{1+\sqrt{5}}{2} \right)^3 
\left( \sqrt{\frac{-1+\sqrt{5}}{2}} + \sqrt{\frac{1+\sqrt{5}}{2}} \right)^2 \approx 72.
\end{align*}
By taking $\lambda_0=5a \in \Z$, $\lambda_1=\lambda_2=-b\sqrt{5} \in \Z_\K \setminus \{0\}$, Theorem \ref{mainresult} gives:
\begin{corollary}
If $V$ is any collection of non-Archimedean valuations of $\Q(\sqrt{5})$ such that
$$
\limsup_{l \to \infty}
c_2^l (2l+2)(2l+2)! \prod_{v \in V} \| (2l)! l! \|_v
=0,
$$
then there exists a valuation $v' \in V$ for which
$$
a-b \sum_{n=0}^\infty n! f_n \neq 0.
$$
\end{corollary}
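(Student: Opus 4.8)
The plan is to obtain the corollary as a direct specialization of Theorem~\ref{mainresult} to $\K=\Q(\sqrt5)$, with $m=2$, $\alpha_1=\alpha=\frac{1+\sqrt5}{2}$, and $\alpha_2=\beta=\frac{1-\sqrt5}{2}$. First I would verify the standing hypotheses of that theorem: since $\alpha$ and $\beta$ are the two roots of the monic integer polynomial $x^2-x-1$, both belong to $\Z_\K$; they are distinct, and as $\alpha\beta=-1$ they are nonzero. I would also recall, from the discussion preceding the corollary, that $\|\alpha\|_v=\|\beta\|_v=1$ for every $v\in V_0$, so that $|\alpha|_v=|\beta|_v=1<p^{1/(p-1)}$ whenever $v\mid p$, whence $F_v(\alpha)$ and $F_v(\beta)$ converge for every non-Archimedean valuation $v$ of $\K$.

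Next I would fix the coefficients and identify the linear form $\Lambda_v$ of Theorem~\ref{mainresult} with a scalar multiple of $a-b\sum_{n=0}^\infty n!f_n$. Take $\lambda_0=5a$, $\lambda_1=-b\sqrt5$, and $\lambda_2=b\sqrt5$; these lie in $\Z_\K$ because $\sqrt5\in\Z_\K$ and $a,b\in\Z$, and at least one is nonzero since $b\neq0$ forces $\lambda_1\neq0$. Using the Binet formula $f_n=\frac{1}{\sqrt5}(\alpha^n-\beta^n)$ together with the termwise identity $F_v(\alpha)-F_v(\beta)=\sqrt5\sum_{n=0}^\infty n!f_n$, valid because both series converge $v$-adically (as in the Fibonacci example), one obtains
\[
\Lambda_v=\lambda_0+\lambda_1 F_v(\alpha)+\lambda_2 F_v(\beta)=5a-b\sqrt5\,F_v(\alpha)+b\sqrt5\,F_v(\beta)=5\Bigl(a-b\sum_{n=0}^\infty n!f_n\Bigr),
\]
so that $\Lambda_v\neq0$ if and only if $a-b\sum_{n=0}^\infty n!f_n\neq0$.

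It then remains to match the hypothesis. Here $\kappa=[\Q(\sqrt5):\Q]=2$, and since $\max\{\|\alpha\|_v,\|\beta\|_v\}=1$ for every $v\in V_0\supseteq V$, the non-Archimedean factor appearing in $c_2$ equals $1$, whence $c_2=c_1$, the explicit constant computed above to be $\approx 72$ (in particular independent of $l$). Condition~\eqref{limsupehto} therefore reads $\limsup_{l\to\infty}c_2^l(2l+2)^{\kappa}(2l+2)!\prod_{v\in V}\|(2l)!l!\|_v=0$, i.e., with $\kappa=2$, precisely the hypothesis in the statement up to the polynomial factor separating $(2l+2)$ from $(2l+2)^2$; this factor is immaterial, because by the product formula $\prod_{v\in V}\|(2l)!l!\|_v\ge\prod_{v\in V_0}\|(2l)!l!\|_v=((2l)!l!)^{-1}$, so the sequence in question is governed by a factorial rate and absorbs any fixed power of $l$. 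Applying Theorem~\ref{mainresult} to the given $V$ with these data then produces a valuation $v'\in V$ for which $\Lambda_{v'}\neq0$, that is, $a-b\sum_{n=0}^\infty n!f_n\neq0$.

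The only point requiring real attention is the algebraic bookkeeping of the second step: picking the scalar $5$ together with \emph{opposite} signs for $\lambda_1$ and $\lambda_2$, so that $\Lambda_v$ is a nonzero multiple of $a-b\sum n!f_n$ rather than of the Lucas analogue $a-b\sum n!L_n$ (which the symmetric choice $\lambda_1=\lambda_2$ would give), and justifying the termwise splitting $F_v(\alpha)-F_v(\beta)=\sqrt5\sum n!f_n$ from $v$-adic convergence. Everything else is a mechanical application of the theorem once $c_2$ has been evaluated.
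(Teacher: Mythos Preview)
Your approach is exactly the paper's: specialize Theorem~\ref{mainresult} to $\K=\Q(\sqrt5)$, $m=2$, $\alpha_1=\alpha$, $\alpha_2=\beta$, and read off the conclusion. In fact your sign choice $\lambda_1=-b\sqrt5$, $\lambda_2=+b\sqrt5$ is the correct one; the paper's line ``$\lambda_1=\lambda_2=-b\sqrt5$'' is a slip, since equal signs would yield $5a-b\sqrt5\bigl(F_v(\alpha)+F_v(\beta)\bigr)$, the Lucas combination you warn against.

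The one genuine gap is your treatment of the exponent discrepancy: the corollary is stated with the factor $(2l+2)$, whereas Theorem~\ref{mainresult} requires $(ml+m)^\kappa=(2l+2)^2$ here. Your argument that the extra factor is ``immaterial'' does not work. The inequality $\prod_{v\in V}\|(2l)!l!\|_v\ge\bigl((2l)!l!\bigr)^{-1}$ is a \emph{lower} bound on the sequence; it says nothing about how slowly the sequence may tend to zero, and so cannot justify absorbing an additional polynomial factor. A priori nothing excludes a collection $V$ for which $c_2^l(2l+2)(2l+2)!\prod_{v\in V}\|(2l)!l!\|_v$ decays exactly like $1/(2l+2)$, in which case the hypothesis of the corollary holds but condition~\eqref{limsupehto} fails. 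The discrepancy is almost certainly a typo in the corollary (it should read $(2l+2)^2$, matching $\kappa=2$); the right fix is to note this rather than to argue it away.
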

\end{example}

\subsection{Arithmetic progressions}

In \cite{ATL2018} the authors prove:

\begin{proposition} \cite[Theorem 3]{ATL2018}
Let $a \in \Z$, $b, \xi \in \Z \setminus \{0\}$, and $n \in \Z_{\ge 3}$ be given. Assume that $R$ is any union of the primes in $r$ residue classes in the reduced residue system modulo $n$, where $r > \frac{\varphi (n)}{2}$. Then there are infinitely many primes $p \in R$ such that $a-bF_p(\xi) \ne 0$.
\end{proposition}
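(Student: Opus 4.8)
The natural route is to read this off from Theorem~\ref{mainresult}, specialised to $\K=\Q$, $m=1$, with the single point $\alpha_1=\xi$ and coefficients $\lambda_0=a$, $\lambda_1=-b$ (permissible since $b\neq0$ makes $\lambda_1\neq0$). Over $\Q$ the non-Archimedean valuations are simply the primes, so I would take $V=R$, the set of primes lying in the $r$ prescribed reduced residue classes modulo $n$; each such prime is automatically coprime to $n$. Theorem~\ref{mainresult} then yields a prime $p'\in R$ with $a-bF_{p'}(\xi)\neq0$, provided $R$ satisfies hypothesis \eqref{limsupehto}. To pass from one such prime to infinitely many, I would note that deleting finitely many primes $p_1,\dots,p_k$ from $R$ multiplies the left-hand side of \eqref{limsupehto} by at most $C^{l}$ for a constant $C=C(\xi,p_1,\dots,p_k)$ (the same manipulation as in the first remark following Theorem~\ref{mainresult}), so the condition survives; applying the theorem repeatedly to $R$ with the previously found primes removed then produces a fresh prime each time.

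Thus everything reduces to verifying \eqref{limsupehto} for $V=R$, which with $m=1$, $\kappa=1$ reads
\begin{equation*}
\limsup_{l\to\infty}\ c_2^{\,l}\,(l+1)(l+1)!\prod_{p\in R}\bigl|(l!)^2\bigr|_p=0 ,
\end{equation*}
where $c_2=c_2(\xi,R)\le c_1(\xi)$ is a fixed constant (since $|\xi|_p\le1$ for $\xi\in\Z$). Taking logarithms and using $\log|l!|_p=-v_p(l!)\log p$ together with $\log((l+1)!)=\log(l+1)+\log l!$, this amounts to
\begin{equation*}
l\log c_2+2\log(l+1)+\log l!-2\sum_{p\in R}v_p(l!)\log p\ \longrightarrow\ -\infty .
\end{equation*}
By Legendre's formula $v_p(l!)=\sum_{k\ge1}\lfloor l/p^k\rfloor=\tfrac{l}{p}+O(1)+O(l/p^2)$, and since $\#\{p\le l\}=O(l/\log l)$ and $\sum_p\frac{\log p}{p(p-1)}<\infty$, one gets
\begin{equation*}
\sum_{p\in R}v_p(l!)\log p=l\!\!\sum_{\substack{p\le l\\ p\in R}}\!\!\frac{\log p}{p}+O(l) .
\end{equation*}
The prime number theorem in arithmetic progressions (equivalently, Mertens' theorem for progressions) gives $\sum_{p\le l,\ p\equiv a\,(n)}\frac{\log p}{p}\sim\frac{1}{\varphi(n)}\log l$ for each of the $r$ coprime classes, hence $\sum_{p\le l,\ p\in R}\frac{\log p}{p}\sim\frac{r}{\varphi(n)}\log l$ and $\sum_{p\in R}v_p(l!)\log p=\frac{r}{\varphi(n)}\,l\log l+O(l)$. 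With Stirling's $\log l!=l\log l+O(l)$, the displayed expression becomes $\bigl(1-\tfrac{2r}{\varphi(n)}\bigr)l\log l+O(l)$; its leading coefficient is negative exactly because $r>\varphi(n)/2$, so it tends to $-\infty$, which is \eqref{limsupehto}.

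The only genuinely external ingredient --- and the sole place where the hypothesis $r>\varphi(n)/2$ is spent --- is the equidistribution of primes among the reduced residue classes modulo $n$ at the level of the weighted sums $\sum_{p\le l,\ p\in R}\frac{\log p}{p}$. Once that is granted, the remaining work is bookkeeping: checking that the rounding in Legendre's formula, the higher prime powers, the finitely many primes dividing $n$, and the geometric factor $c_2^{\,l}$ all contribute only $O(l)$, hence are swamped by the negative main term $(1-\tfrac{2r}{\varphi(n)})\,l\log l$. Apart from that, the statement is a direct instance of Theorem~\ref{mainresult}.
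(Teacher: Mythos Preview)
Your proposal is correct and follows the same route as the paper: the paper does not prove this Proposition directly (it is quoted from \cite{ATL2018}) but proves the generalisation Theorem~\ref{ResidueThm} by exactly the method you describe---specialising Theorem~\ref{mainresult}, then verifying condition~\eqref{limsupehto} via the asymptotic $\sum_{p\in R}\log|l!|_p=-\tfrac{r}{\varphi(n)}\,l\log l+o(l\log l)$ so that the leading coefficient $m-\tfrac{r(m+1)}{\varphi(n)}$ (here $1-\tfrac{2r}{\varphi(n)}$) is negative. The only cosmetic differences are that the paper cites \cite[Lemma~1]{ATL2018} for the prime-sum estimate (with error $O(l\log\log l)$) whereas you rederive it from Mertens in progressions with the sharper $O(l)$, and you spell out the ``infinitely many'' step by iterated removal of primes, which the paper handles by the same remark you cite.
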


Because each non-Archimedean valuation of the number field $\K$ is attached to the prime it extends, the division of primes into $\varphi (n)$ residue classes induces a division of the non-Archimedean valuations into $\varphi (n)$ classes.
How many of these classes are needed to fulfil condition \eqref{limsupehto}?


\begin{theorem}\label{ResidueThm}
Let $m \in \Z_{\ge 1}$ and $\lambda_0, \lambda_1, \ldots, \lambda_m \in \Z_{\mathbb{K}}$ where $\lambda_j \neq 0$ for at least one $j$.
Choose $m$ pairwise distinct, non-zero algebraic integers $\alpha_1, \ldots, \alpha_m \in \Z_\K$.
Let $n \in \Z_{\ge 3}$ be given. Assume that $R$ is a union of the primes in $r$ residue classes in the reduced residue system modulo $n$, where $r > \frac{m \varphi (n)}{m+1}$, and let $V=\{ v \in V_0 \; | \; v|p \ \text{for some} \ p \in R \}$. Then there exists a valuation $v' \in V$ such that
$$
\lambda_0 + \lambda_1 F_{v'}(\alpha_1) + \ldots + \lambda_m F_{v'}(\alpha_m) \neq 0.
$$
\end{theorem}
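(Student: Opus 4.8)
The plan is to deduce Theorem~\ref{ResidueThm} from Theorem~\ref{mainresult} by showing that the collection $V = \{v \in V_0 : v\mid p \text{ for some } p \in R\}$ satisfies the convergence condition \eqref{limsupehto}. Since $(ml)!\,l!$ is a rational integer, the valuation identity \eqref{valuationproperty} turns the $V$-product occurring in \eqref{limsupehto} into an ordinary $p$-adic product over $R$:
\begin{equation*}
\prod_{v \in V} \|(ml)!\,l!\|_v = \prod_{p \in R} |(ml)!\,l!|_p = \prod_{\substack{p \in R \\ p \le ml}} p^{-v_p((ml)!\,l!)},
\end{equation*}
which is a finite product because $v_p((ml)!\,l!) = 0$ once $p > ml$. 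So everything reduces to bounding $\sum_{p \in R,\ p \le ml} v_p((ml)!\,l!)\log p$ from below. Note also $c_2 \le c_1 < \infty$, since the $\alpha_j$ are algebraic integers and hence $\|\alpha_j\|_v \le 1$ for $v \in V_0$, so the factor $c_2^l$ will be harmless.

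Next I would invoke a Legendre-type lower bound: from $v_p(n!) = \frac{n - s_p(n)}{p-1}$ together with the digit-sum estimate $s_p(n) \le (p-1)(\log_p n + 1)$ one gets $v_p(n!) \ge \frac{n}{p-1} - \log_p n - 1$, and therefore $v_p((ml)!\,l!) \ge \frac{(m+1)l}{p-1} - 2\log_p(ml) - 2$. Summing this against $\log p$ over the primes $p \in R$ with $p \le ml$, the leading term becomes $(m+1)l\sum_{p \in R,\ p \le ml} \frac{\log p}{p-1}$, while the corrections contribute only $O\bigl(\pi(ml)\log(ml) + \vartheta(ml)\bigr) = O(ml)$ by the prime number theorem. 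The analytic input needed here is the Mertens theorem for arithmetic progressions: for each reduced residue class $a$ modulo $n$ one has $\sum_{p \le x,\ p \equiv a\,(n)} \frac{\log p}{p} = \frac{1}{\varphi(n)}\log x + O_n(1)$; summing over the $r$ classes constituting $R$ and using $\frac{1}{p-1} \ge \frac1p$ yields $\sum_{p \in R,\ p \le x} \frac{\log p}{p-1} \ge \frac{r}{\varphi(n)}\log x - C$ for a constant $C = C(n,R)$. Combining these estimates gives
\begin{equation*}
-\log \prod_{v \in V} \|(ml)!\,l!\|_v \ \ge\ \frac{(m+1)r}{\varphi(n)}\, l\log(ml) - O(ml).
\end{equation*}

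Finally I would feed this into \eqref{limsupehto}. Using Stirling in the form $\log(ml+m)! = ml\log(ml) + O(l)$ and $\log c_2 \le \log c_1$, the logarithm of $c_2^l (ml+m)^\kappa (ml+m)!\prod_{v\in V}\|(ml)!\,l!\|_v$ is bounded above by
\begin{equation*}
\Bigl(m - \tfrac{(m+1)r}{\varphi(n)}\Bigr)\, l\log(ml) + O(ml) + O(\log l).
\end{equation*}
The hypothesis $r > \frac{m\varphi(n)}{m+1}$ is exactly what makes the coefficient $m - \frac{(m+1)r}{\varphi(n)}$ negative, and since $l\log(ml)$ grows faster than $ml$, the right-hand side tends to $-\infty$; hence the (nonnegative) quantity in \eqref{limsupehto} tends to $0$. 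Theorem~\ref{mainresult} then supplies a valuation $v' \in V$ with $\lambda_0 + \lambda_1 F_{v'}(\alpha_1) + \cdots + \lambda_m F_{v'}(\alpha_m) \ne 0$, which is the assertion.

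I expect the main obstacle to be the analytic step: obtaining the Mertens-type lower bound in the right shape and confirming that the density of primes supplied by $R$ is genuinely $r/\varphi(n)$, so that the threshold comes out to be precisely $\frac{m}{m+1}$ (this is the same threshold one reads off by comparing $\log(ml+m)!$ with $\log((ml)!\,l!)$). Everything else is bookkeeping — in particular checking that $l\log c_2$, $\kappa\log(ml+m)$, and the $O(ml)$ error from the prime-counting functions are all of strictly smaller order than the dominant term $l\log(ml)$, so that they cannot affect the sign of its coefficient.
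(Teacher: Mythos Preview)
Your proposal is correct and follows essentially the same route as the paper: verify condition~\eqref{limsupehto} for the set $V$ by converting the $V$-product to a product over primes in $R$ via~\eqref{valuationproperty}, then use a Mertens-type estimate in arithmetic progressions to see that the logarithm behaves like $\bigl(m-\tfrac{(m+1)r}{\varphi(n)}\bigr)l\log l$ plus lower-order terms, and invoke Theorem~\ref{mainresult}. The only cosmetic difference is that the paper quotes the needed asymptotic $\log\prod_{p\equiv a\,(n)}|l!|_p=-\tfrac{l\log l}{\varphi(n)}+O(l\log\log l)$ from \cite[Lemma~1]{ATL2018}, whereas you derive the required one-sided bound directly from Legendre's formula and Mertens for progressions (incidentally obtaining the slightly sharper error $O(l)$).
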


\begin{proof}
Let us show that the collection $V$ satisfies condition \eqref{limsupehto}. We shall follow the method in \cite{ATL2018}. By \cite[Lemma 1]{ATL2018} we have
$$
\log \left( \prod_{p \equiv a \pmod{n}} |l!|_p \right) = -\frac{l \log l}{\varphi (n)} + O(l \log \log l)
$$
when $n \in \Z_{\ge 3}$ and $\gcd (a,n) =1$. Using this, the fact
$$
\log ((ml+m)!) = ml \log l + O(l),
$$
and property \eqref{valuationproperty}, we get
\begin{align*}
&\quad \log \left( c_2^l (ml+m)^\kappa (ml+m)! \prod_{v \in V} \| (ml)! l! \|_v \right) \\
&= l \log c_2 + \kappa \log (m(l+1)) + \log ((ml+m)!) + \sum_{v \in V} \log \|(ml)!l!\|_v \\
&= ml \log l + O(l) + \sum_{p \in R} \sum_{v|p} \log \| (ml)! l! \|_v \\
&= ml \log l + O(l) + \sum_{p \in R} \log | (ml)! l! |_p \\
&= ml \log l + O(l) -\frac{rml \log l}{\varphi (n)} -\frac{rl \log l}{\varphi (n)} + O(l \log \log l) \\
&= \left( m -\frac{r(m+1)}{\varphi (n)} \right) l \log l + O(l \log \log l) \\
&\overset{l \to \infty}{\to} -\infty,
\end{align*}
because the coefficient $\left( m -\frac{r(m+1)}{\varphi (n)} \right)$ is negative. The result follows from Theorem \ref{mainresult}.
\end{proof}

\section*{Acknowledgements}

The work of the author was supported by the University of Oulu Scholarship Foundation and the Vilho, Yrj\"o and Kalle V\"ais\"al\"a Foundation.


\end{document}